\documentclass[11pt]{article}

\usepackage[a4paper,margin=30mm]{geometry}
\usepackage[T1]{fontenc}
\usepackage{lmodern}
\usepackage{microtype}
\usepackage{amsmath,amssymb,amsthm,mathtools}
\usepackage{booktabs}
\usepackage{enumitem}
\usepackage[hidelinks]{hyperref}
\usepackage{xcolor}

\definecolor{heading}{RGB}{35,55,78}
\usepackage{sectsty}
\sectionfont{\color{heading}}
\subsectionfont{\color{heading}}

\newtheorem{theorem}{Theorem}[section]
\newtheorem{lemma}[theorem]{Lemma}
\newtheorem{proposition}[theorem]{Proposition}
\newtheorem{corollary}[theorem]{Corollary}
\theoremstyle{remark}
\newtheorem{remark}[theorem]{Remark}

\newcommand{\C}{\mathbb C}

\newcommand{\T}{\mathbb T}
\newcommand{\ZF}{Z_F}
\newcommand{\ZFN}{Z_{F,N}}
\newcommand{\Ree}{\operatorname{Re}}
\newcommand{\JF}{J_F}
\newcommand{\ZenodoDOI}{10.5281/zenodo.22300938}

\title{\bfseries The Right Edge of the Zero Set of the Fibonacci Zeta Function}
\author{Marco Mantovanelli\\[2pt]
\small Independent Researcher, Germany\\[1pt]
\small \href{mailto:marco@mantovanelli.de}{\texttt{marco@mantovanelli.de}}\\[1pt]
\small \href{https://orcid.org/0009-0002-0631-293X}{ORCID: 0009-0002-0631-293X}}
\date{}

\begin{document}
\maketitle

\begin{abstract}
Let $F_1=F_2=1$, $F_{n+2}=F_{n+1}+F_n$, and
\[
\ZF(s)=\sum_{n\ge 1}F_n^{-s},\qquad \Ree s>0.
\]
We determine the exact right edge of the closure of the real parts of zeros in the half-plane of absolute convergence.  If $\sigma_F$ is the unique solution of
\[
\ZF(\sigma_F)=4+2\,144^{-\sigma_F},
\]
then
\[
\sigma_F=0.743163398726901648\ldots,
\qquad
\ZF(s)\neq0\quad(\Ree s\ge\sigma_F),
\]
and
\[
\overline{\{\Ree\rho:\ZF(\rho)=0,\ \Ree\rho>0\}}=[0,\sigma_F].
\]
The edge is sharp in a strong almost-periodic sense: zeros occur with relatively dense ordinates near every admissible vertical line.  We quantify their approach to the edge by proving phase locking of a growing set of Fibonacci terms and a Diophantine zero-free cusp; in particular the least height $T_F(\varepsilon)$ of a zero with $\sigma_F-\varepsilon<\Ree\rho<\sigma_F$ satisfies
\[
T_F(\varepsilon)\gg_\eta \varepsilon^{-1/(15.232+\eta)}.
\]
We also determine the Jessen function of the series structurally: it is $C^\infty$, strictly convex on $(0,\sigma_F)$, equals $\log2$ for $\sigma\ge\sigma_F$, and its Riesz measure gives a smooth mean vertical zero density that is supported up to the edge and vanishes there to infinite order.  For every partial sum with $N\ge12$ we obtain the exact right edge $\sigma_N$ of the closure of its positive zero real parts, prove $\sigma_N\nearrow\sigma_F$, and give an explicit exponential asymptotic for $\sigma_F-\sigma_N$.  The same phase mechanism yields a finite-core theorem for positive integral Lucas zeta functions; as an example, the corresponding closure edge for the Pell zeta function is $0.875850904477574106\ldots$.  Finally, using the known meromorphic continuation, we construct a natural $q$-Pochhammer pole-removing completion that is entire of exact order $2$ and type $\log\varphi/4$, yielding a quadratic global bound for the number of zeros.  The common arithmetic source of these results is the exceptional multiplicative dependence at $F_6=2^3$ and $F_{12}=2^4 3^2$, together with primitive-divisor theorems beyond this finite core.
\end{abstract}

\medskip
\noindent\textbf{Keywords.} Fibonacci zeta function; Lucas zeta function; zeros of Dirichlet series; primitive divisors; Jessen function; zero density; Diophantine approximation; $q$-Pochhammer product; entire functions; almost periodicity.

\noindent\textbf{MSC 2020.} Primary 11M41, 30B50; Secondary 11B39, 30D05.

\section{Introduction}

The Fibonacci zeta function
\[
\ZF(s)=\sum_{n\ge1}F_n^{-s}
\]
converges absolutely for $\Ree s>0$, since $F_n\asymp \varphi^n$, where $\varphi=(1+\sqrt5)/2$.  Its meromorphic continuation was obtained independently by Egami and Navas; see \cite{Egami1999,Navas2001}.  Further accounts and recent generalizations appear in \cite{Murty2013,AssafKuanLowryDudaWalker2024}.  Mora studied the zeros of the partial sums and, through the corresponding Henry upper bounds, obtained the zero-free half-plane
\[
\ZF(s)\neq0\qquad (\Ree s>\eta),
\]
where $\eta$ is the unique positive solution of $\ZF(\eta)=4$ and
\[
\eta=0.757054949690654898\ldots;
\]
see \cite{Mora2023} and the discussion in \cite{MoraNavasVarona2025}.  The problem addressed here is to replace this upper bound by the actual right boundary of the closure of the zero real parts and then to understand what the same arithmetic mechanism says about the distribution, height, finite truncations, related Lucas zeta functions, and the global meromorphic continuation.

The key point is that the phases of the Fibonacci terms are not independent.  The first relevant dependencies are
\[
F_3=2,\qquad F_4=3,\qquad F_6=8=2^3,\qquad F_{12}=144=2^4 3^2.
\]
The indices $6$ and $12$ are precisely the exceptional indices, apart from $1,2$, in Carmichael's primitive-divisor theorem for Fibonacci numbers; see \cite{Yabuta2001}.  Once this finite dependent block is isolated, the remaining terms acquire new prime phase freedom.  This leads to a finite-dimensional torus extremal problem for the zero-free boundary and, in the opposite direction, to a phase-realization argument that makes the boundary sharp.

Define
\begin{equation}\label{eq:sigma-def}
H(\sigma):=\ZF(\sigma)-4-2\,144^{-\sigma}.
\end{equation}
Our main result is the following.

\begin{theorem}\label{thm:main}
There is a unique $\sigma_F\in(0.7,0.75)$ with $H(\sigma_F)=0$.  It satisfies
\[
\sigma_F=0.743163398726901648\ldots.
\]
Moreover,
\begin{enumerate}[label=\textup{(\roman*)}]
\item $\ZF(s)\neq0$ for $\Ree s\ge\sigma_F$;
\item if
\[
\mathcal R_F:=\{\Ree\rho:\ZF(\rho)=0,\ \Ree\rho>0\},
\]
then
\[
\overline{\mathcal R_F}=[0,\sigma_F];
\]
\item for $0<\varepsilon<\sigma_F-1/2$, put
\[
T_F(\varepsilon):=\min\bigl\{|\Im\rho|:\ZF(\rho)=0,\ \sigma_F-\varepsilon<\Ree\rho<\sigma_F\bigr\}.
\]
Then, for every $\eta>0$,
\begin{equation}\label{eq:T-lower-main}
T_F(\varepsilon)\gg_\eta \varepsilon^{-1/(15.232+\eta)}
\qquad(\varepsilon\downarrow0).
\end{equation}
\end{enumerate}
In particular,
\[
\sup\mathcal R_F=\sigma_F,
\]
but no zero lies on the line $\Ree s=\sigma_F$.
\end{theorem}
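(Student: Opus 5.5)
The plan is to treat $\ZF(\sigma+it)$ as a Bohr almost periodic function in $t$ and to determine exactly the set of values it can approach on each vertical line. By Kronecker's theorem, this set is the closure of the image of a phase map from an explicit torus.

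\textbf{Step 1 (the phase torus).} Write $F_n^{-it}=e^{-it\log F_n}$. By Carmichael's primitive-divisor theorem (\cite{Yabuta2001}), every $F_n$ with $n\ge3$, $n\neq 6,12$, has a prime divisor not dividing any earlier Fibonacci number. So the logarithms of the terms other than $F_1,F_2,F_6,F_{12}$ are $\mathbb Q$-linearly independent. The remaining terms are dependent: $\log F_6=3\log 2$ and $\log F_{12}=4\log2+2\log3$. The closure of $\{(F_n^{-it})_n:t\in\mathbb R\}$ is therefore the torus parametrised by $\theta_2,\theta_3$ (the phases of $F_3=2$ and $F_4=3$) together with one free phase $\theta_n$ for each $n\ge5$, $n\neq6,12$. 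This gives the splitting $\ZF=C_\sigma+R_\sigma$ with core
\[
C_\sigma(\theta_2,\theta_3)=2+2^{-\sigma}e^{i\theta_2}+3^{-\sigma}e^{i\theta_3}+8^{-\sigma}e^{3i\theta_2}+144^{-\sigma}e^{i(4\theta_2+2\theta_3)}
\]
and a free part $R_\sigma$ whose phases are independent. The free part therefore fills exactly the closed disc of radius $\rho(\sigma):=\ZF(\sigma)-2-2^{-\sigma}-3^{-\sigma}-8^{-\sigma}-144^{-\sigma}$. This uses the polygon inequality: the largest free term $5^{-\sigma}$ is at most the sum of the others, which holds for $\sigma$ in the relevant range. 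Hence $0$ lies in the closure of the value set on $\Ree s=\sigma$ if and only if $\min_{\theta}|C_\sigma|\le\rho(\sigma)$.

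\textbf{Step 2 (core extremal problem).} This is the step I expect to be the main obstacle. I need to show that for $\sigma\in[0.7,0.76]$,
\[
\min_{\theta_2,\theta_3}|C_\sigma|=2-2^{-\sigma}-3^{-\sigma}-8^{-\sigma}+144^{-\sigma},
\]
attained uniquely at $\theta_2=\theta_3=\pi$. First, $\Ree C_\sigma\ge 2-2^{-\sigma}-3^{-\sigma}-8^{-\sigma}-144^{-\sigma}>0$, so the origin is never reached. I would then minimise $\Ree C_\sigma$. Away from a neighbourhood of $(\pi,\pi)$, a crude bound on $|C_\sigma|\ge\Ree C_\sigma$ suffices. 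Near $(\pi,\pi)$, a second-order expansion shows that $(\pi,\pi)$ is a nondegenerate local minimum, because the terms $2^{-\sigma},3^{-\sigma}$ dominate $9\cdot 8^{-\sigma}$ and $144^{-\sigma}$ in the Hessian. The remaining gap would be closed by a rigorous interval or grid computation.

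With the minimum identified, the criterion $\min|C_\sigma|\le\rho(\sigma)$ becomes $\ZF(\sigma)\ge4+2\cdot144^{-\sigma}$, that is, $H(\sigma)\ge0$. I would then show $H$ is strictly decreasing on $(0.7,0.75)$ and changes sign there, with sign checks by interval arithmetic. This gives uniqueness of $\sigma_F$ and its numerical value.

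\textbf{Step 3 (parts (i), (ii)).} For $\sigma>\sigma_F$ the inequality is strict, so $|\ZF|\ge\min|C_\sigma|-\rho(\sigma)>0$, and this holds uniformly on closed half-planes.

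On $\sigma=\sigma_F$, a zero would need exact realisation of the extremal configuration. In particular $2^{-it}=3^{-it}=-1$ and $5^{-it}=-1$, with all free terms pointing in the direction $-1$. Already $2^{-it}=-1$ and $5^{-it}=-1$ force $t\log2,\ t\log5\in\pi(2\mathbb Z+1)$, so $\log2/\log5\in\mathbb Q$, which is a contradiction. This proves (i).

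For $\sigma<\sigma_F$, pick a torus point with $C_\sigma+R_\sigma=0$ at which the derivative in $\sigma$ is nonzero. By Kronecker's theorem there are relatively dense $t$ approximating this point. Hurwitz/Rouché in a small disc then produces zeros with real part within any prescribed distance of $\sigma$. So $(0,\sigma_F)\subset\overline{\mathcal R_F}$, and (ii) follows by taking closures.

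\textbf{Step 4 (part (iii)).} Suppose $\rho=\sigma+it$ is a zero with $\sigma_F-\varepsilon<\sigma<\sigma_F$. The slack $\min|C_\sigma|-\rho(\sigma)\ge0$ at $\sigma_F$ degrades by only $O(\varepsilon)$. Hence the phase defects must satisfy $\sum_n F_n^{-\sigma}(1-\cos\delta_n)+(\text{core defect})\ll\varepsilon$.

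This phase-locking condition has two consequences. First, $t\log2$ and $t\log3$ are within $O(\sqrt\varepsilon)$ of $\pi$ modulo $2\pi$. Second, all free terms with $F_n^{-\sigma}\gg\varepsilon^{1-o(1)}$ have phases within $o(1)$ of $\pi$; there are $K\asymp\log(1/\varepsilon)/(\sigma_F\log\varphi)$ such indices. These give simultaneous inhomogeneous approximations to $(\pi,\dots,\pi)$ by $t(\log2,\log3,\log5,\log13,\dots)$.

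A Dirichlet-box or pigeonhole counting argument, using that the relevant logarithms are linearly independent and that the target is the specific point $\pi(1,\dots,1)$, forces $|t|\ge\varepsilon^{-c}$. Optimising the number of locked terms against the quality of approximation should yield the exponent $1/(15.232+\eta)$; I expect this exponent to arise from the ratio $\log(1/\varepsilon)$ versus $K\log|t|$ with constants $\sigma_F$ and $\log\varphi$. A Baker-type lower bound for $|\sum b_i\log p_i|$ would handle the deterministic obstruction that remains.
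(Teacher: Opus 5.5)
\textbf{Step~4 has a genuine gap.} A Dirichlet-box or pigeonhole argument only produces good approximations, i.e.\ \emph{upper} bounds for hitting times; it cannot force $|t|\ge\varepsilon^{-c}$. A lower bound needs a Diophantine lower bound for a specific nonvanishing linear form in logarithms.

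Bringing in the $K\asymp\log(1/\varepsilon)$ free locks works against you. They hold only to precision $o(1)$, and Baker-type constants deteriorate rapidly as the number of logarithms grows. Your guess that the exponent comes from $\sigma_F$ and $\log\varphi$ is also wrong: $15.232=2(8.616-1)$, where $8.616$ is Rhin's irrationality-measure bound for $\log3/\log2$.

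The missing argument uses only the two core locks, which are $\sqrt\varepsilon$-precise. Because the target is $-1$ for both, there are odd integers $a,b$ with $|t\log2-a\pi|+|t\log3-b\pi|\ll\sqrt\varepsilon$. Eliminating $t$ turns this inhomogeneous condition into a homogeneous one:
\[
\pi\,|a\log3-b\log2|\le\log3\,|t\log2-a\pi|+\log2\,|t\log3-b\pi|\ll\sqrt\varepsilon,
\qquad |a|\asymp|b|\asymp1+|t|.
\]
Take Rhin's bound $|\log3/\log2-p/q|\gg_\delta q^{-8.616-\delta}$ with $q=|a|$ and multiply by $|a|\log2$. This gives $|a\log3-b\log2|\gg_\delta|a|^{-7.616-\delta}$, hence $1+|t|\gg_\delta\varepsilon^{-1/(15.232+2\delta)}$. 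Your Step~2 (unique nondegenerate minimum plus compactness, uniformly in $\sigma$) does supply the needed quadratic stability $|2^{-it}+1|^2+|3^{-it}+1|^2\ll\varepsilon$ at a zero. So this two-logarithm Diophantine step is exactly what is missing, and without it the stated exponent is not reached.

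\textbf{Steps 1--3 are otherwise sound.} They follow the paper's architecture: the dependent core $1,1,2,3,8,144$, the free tail via Carmichael's theorem, the polygon lemma, and Kronecker plus Hurwitz. Where the paper uses the weighted Cauchy--Schwarz bound
\[
D|e^{i(4u+2v)}-1|^2\le K_\sigma\bigl(A|e^{iu}-1|^2+C|e^{3iu}-1|^2+B|e^{iv}-1|^2\bigr),\qquad K_\sigma<1,
\]
valid for all $\sigma\ge1/2$, you use a Hessian-plus-interval computation. That substitute is workable, but only on the range you check. Three points need fixing:
\begin{enumerate}
\item The cubic term contributes \emph{positive} curvature $9\cdot8^{-\sigma}$ at $(\pi,\pi)$ and needs no domination. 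Your stated reason is in fact false: near $\sigma_F$, $9\cdot8^{-\sigma}\approx1.9>2^{-\sigma}$. Only $144^{-\sigma}(4u+2v)^2$ enters with a negative sign.
\item Your range $[0.7,0.76]$ does not cover everything. For $\sigma>0.76$, part (i) follows from the plain triangle inequality, since Mora's constant is about $0.757<0.76$. You also need $H<0$ on $(\sigma_F,0.76]$, and $|L|\le R$ on $(0,0.7)$ for part (ii). The paper's observation that the $n=3,4,5,6$ terms dominate $(\log144)144^{-\sigma}$ in $H'$ gives $H'<0$ on all of $(0,\infty)$, and $R+L>0$ handles the case $L<0$.
\item The nonzero-derivative requirement in Step~3 is unnecessary. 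Hurwitz's theorem only needs the twisted series to be not identically zero.
\end{enumerate}
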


Theorem~\ref{thm:main} is the organizing result, but several later statements are intended to show that the edge is part of a broader zero geometry rather than an isolated constant.  First, uniform almost periodicity implies that zeros with real part near any prescribed $\sigma\in(0,\sigma_F]$ occur with relatively dense ordinates.  A quantitative stability form of the torus bound then forces increasingly many Fibonacci phases into the extremal configuration as $\Ree\rho\uparrow\sigma_F$; together with Rhin's estimate for the irrationality measure of $\log3/\log2$ \cite{Rhin1987}, this gives the cusp estimate in part~(iii).

Second, the same almost-periodic model admits a Jessen function
\[
\JF(\sigma)=\lim_{T\to\infty}\frac1{2T}\int_{-T}^{T}\log|\ZF(\sigma+it)|\,dt.
\]
Using the Jessen--Tornehave theory \cite{Jessen1933,JessenTornehave1945}, together with the independent phase freedom supplied by primitive divisors, we prove that $\JF$ is smooth, strictly convex in the zero-bearing region, and exactly constant for $\sigma\ge\sigma_F$.  Its Riesz measure therefore gives a canonical mean vertical zero density; the resulting density reaches every point of the admissible interval and becomes flat to infinite order at the right edge.

Third, the finite-core mechanism persists in two different directions.  For every partial sum $\ZFN(s)=\sum_{n\le N}F_n^{-s}$ with $N\ge12$, the exact right edge of the closure of its positive zero real parts is again controlled by the same $F_{12}=144$ correction, and the edges $\sigma_N$ converge exponentially to $\sigma_F$.  For arithmetic Lucas sequences, the Bilu--Hanrot--Voutier primitive-divisor theorem makes the principle genuinely finite: after the first $30$ terms, new primitive primes provide the required phase freedom.  This yields an exact finite-torus characterization of the closure of the positive real parts of zeros.  Lucas zeta functions and their meromorphic continuation were studied by Kamano \cite{Kamano2013}, while Pan and Liu recently obtained zero-free regions for this wider family \cite{PanLiu2026}; our finite-core result addresses the complementary sharpness problem.  The Pell zeta function provides a simple fully explicit example.

Finally, the known meromorphic continuation allows the local zero geometry in $\Ree s>0$ to be connected with the global analytic structure.  A $q$-Pochhammer factor removes the complete pole lattice and produces an entire completion of exact order $2$ and type $\log\varphi/4$.  This gives a canonical Hadamard product over the zeros and a quadratic global zero bound.  Thus the paper has two complementary scales: a sharp arithmetic phase analysis in the half-plane of convergence and an entire-function completion controlling the global zero divisor.

For orientation, Sections~2--4 establish the sharp boundary and its almost-periodic sharpness.  Section~\ref{sec:lucas} develops the finite-core Lucas principle and the Pell example.  Section~\ref{sec:jessen} treats the Jessen function and mean zero density, and Section~\ref{sec:height} quantifies the approach to the right edge.  The following section determines the exact edges of the partial sums, Section~\ref{sec:completion} constructs the global pole-removing completion, and the final section records numerical values and reproducibility checks.

\section{The exceptional four-term block}

For $\sigma>0$ put
\[
A=2^{-\sigma},\qquad B=3^{-\sigma},\qquad C=8^{-\sigma},\qquad D=144^{-\sigma},
\]
and define
\begin{equation}\label{eq:L-R}
L(\sigma)=2-A-B-C+D,
\qquad
R(\sigma)=\sum_{\substack{n\ge5\\ n\neq6,12}}F_n^{-\sigma}.
\end{equation}
Then
\begin{equation}\label{eq:H-RL}
H(\sigma)=R(\sigma)-L(\sigma).
\end{equation}

\begin{lemma}\label{lem:H}
The function $H$ is positive on $(0,\sigma_F)$, vanishes at $\sigma_F$, and is negative on $(\sigma_F,\infty)$.  Moreover $\sigma_F\in(0.7,0.75)$ is unique.
\end{lemma}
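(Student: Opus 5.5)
The plan is to prove that $H$ is strictly decreasing on $(0,\infty)$, check its sign at the two ends of $(0,\infty)$, and then locate the unique zero in $(0.7,0.75)$ by rigorous numerical evaluation.

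\emph{Sign at the ends.} As $\sigma\downarrow0$ we have $\ZF(\sigma)\ge\sum_{n\le N}F_n^{-\sigma}\to N$ for every $N$, so $H(\sigma)\to+\infty$. As $\sigma\to\infty$, $\ZF(\sigma)\to2$ (from $F_1=F_2=1$) and $144^{-\sigma}\to0$, so $H(\sigma)\to-2$. By continuity there is at least one zero.

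\emph{Strict monotonicity.} Differentiating termwise, which is legitimate by locally uniform convergence on $\Ree s\ge\delta>0$, gives
\[
H'(\sigma)=-\sum_{n\ge3,\ n\ne12}F_n^{-\sigma}\log F_n+\log144\cdot144^{-\sigma}.
\]
The coefficient $2$ in front of $144^{-\sigma}$ over-compensates the $n=12$ term, so a single positive term survives. I would absorb it into a few neighbouring negative terms. Since $89\cdot233=20737>144^2$, AM--GM gives
\[
89^{-\sigma}\log89+233^{-\sigma}\log233\ge2\sqrt{\log89\log233}\,(89\cdot233)^{-\sigma/2}.
\]
This is only about $0.5\%$ short of $\log144\cdot144^{-\sigma}$, because $2\sqrt{\log89\log233}\approx9.89$ while $2\log144\approx9.94$. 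The deficit is covered by adding the $n=10$ term $55^{-\sigma}\log55$, which exceeds $\log144\cdot144^{-\sigma}$ by itself as soon as $(144/55)^{\sigma}\ge\log144/\log55\approx1.24$, that is for $\sigma\gtrsim0.23$. For smaller $\sigma$ the pair $n=11,13$ together with $n=10$ trivially dominates, since all these quantities are close to their logarithms. Concretely, I would show
\[
55^{-\sigma}\log55+89^{-\sigma}\log89+233^{-\sigma}\log233>\log144\cdot144^{-\sigma}\qquad(\sigma>0),
\]
by dividing through by $144^{-\sigma}$. The left-hand side then becomes a sum of increasing functions of $\sigma$ plus one decreasing term $(144/233)^{\sigma}\log233$. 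Its value at $\sigma=0$ is already about $14.4>4.97$, and the $55$ and $89$ terms grow. Hence $H'<0$ everywhere, so $H$ has exactly one zero $\sigma_F$, with $H>0$ to its left and $H<0$ to its right.

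\emph{Localization.} Using monotonicity, it suffices to show $H(0.7)>0>H(0.75)$. I would compute $\sum_{n\le N}F_n^{-\sigma}$ exactly to high precision for, say, $N=60$. The tail I would bound by
\[
F_n\ge\varphi^{n-2}\ \Rightarrow\ \sum_{n>N}F_n^{-\sigma}\le\frac{\varphi^{-(N-1)\sigma}}{1-\varphi^{-\sigma}},
\]
which is below $10^{-10}$ at these $\sigma$. Since $\ZF(0.757\ldots)=4$ and $H(\sigma)<\ZF(\sigma)-4$, the value $H(0.75)$ should be negative by a visible margin. The value $H(0.7)$ should be positive by a comparable margin, roughly $0.3$. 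Interval arithmetic closes both.

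The main obstacle I expect is the monotonicity step. The one-term AM--GM comparison $89\cdot233$ versus $144^2$ fails only by a hair, so the inequality must be organised carefully, with an explicit choice of extra terms and a possible split of $\sigma$-ranges, rather than taking one neighbour. If the analytic bookkeeping becomes awkward, the fallback is to prove $H'<0$ only on $[0.7,0.75]$. There a crude comparison with the $n=10$ term alone suffices, and for the sign of $H$ outside that interval one uses $H(\sigma)>\ZF(0.7)-4-2\cdot144^{-0.7}>0$ on $(0,0.7]$. On $[0.75,\infty)$ one uses the $R-L$ form~\eqref{eq:H-RL}: $R$ is decreasing, and $L$ is increasing for $\sigma\ge0.75$ because $2^{-\sigma}\log2$ dominates $144^{-\sigma}\log144$ once $72^{\sigma}\ge\log144/\log2$.
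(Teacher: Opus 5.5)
Your proof is correct and follows the paper's route. You prove strict monotonicity of $H$ by dominating the single surviving positive term $(\log144)144^{-\sigma}$ in $H'$, use $H\to+\infty$ at $0$, and certify $H(0.7)>0>H(0.75)$ with the same geometric tail bound; the paper differs only in dominating with $n=3,4,5,6$, via $(\log2)72^\sigma+(\log3)48^\sigma+(\log5)(144/5)^\sigma+(\log8)18^\sigma>\log240>\log144$.

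Two of your remarks are off, though neither affects the argument.

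\begin{itemize}
\item The monotonicity obstacle you anticipate does not exist. Your ``$0.5\%$ short'' compares against $2\log144$ rather than the surviving coefficient $\log144$. Already $\log55+\log89>\log144$ closes the inequality for all $\sigma>0$.
\item Your heuristic for $H(0.75)<0$ via $\eta$ points the wrong way. Since $0.75<\eta$, one has $\ZF(0.75)-4>0$, so $H<\ZF-4$ tells you nothing there. The sign genuinely rests on the computation, which gives $H(0.75)\approx-0.023$.
\end{itemize}
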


\begin{proof}
For every $\sigma>0$,
\[
H'(\sigma)
=-\sum_{\substack{n\ge3\\ n\neq12}}(\log F_n)F_n^{-\sigma}
 +(\log144)144^{-\sigma}.
\]
The four terms $n=3,4,5,6$ already dominate the positive term.  Indeed, after multiplication by $144^\sigma$ their contribution is
\[
 (\log2)72^\sigma+(\log3)48^\sigma
 +(\log5)(144/5)^\sigma+(\log8)18^\sigma
 >\log(2\cdot3\cdot5\cdot8)=\log240>\log144.
\]
Hence $H'(\sigma)<0$ on $(0,\infty)$.  Moreover $H(\sigma)\to+\infty$ as $\sigma\downarrow0$, since $\ZF(\sigma)\to+\infty$.  Direct summation gives
\[
H(0.7)=0.1557443681\ldots>0,
\qquad
H(0.75)=-0.0230711816\ldots<0.
\]
For example, these signs are certified by summing through $F_{80}$ and using
\[
\sum_{n>N}F_n^{-\sigma}
\le \frac{\varphi^{-(N-1)\sigma}}{1-\varphi^{-\sigma}},
\qquad F_n\ge\varphi^{n-2}.
\]
The claim follows.
\end{proof}

The next lemma is the key improvement over the ordinary triangle inequality.

\begin{lemma}[Torus lower bound]\label{lem:torus}
For every $\sigma\ge1/2$ and $z,w\in\T$,
\begin{equation}\label{eq:torus-bound}
\left|2+Az+Bw+Cz^3+Dz^4w^2\right|\ge L(\sigma).
\end{equation}
Moreover equality can hold only when $z=w=-1$.
\end{lemma}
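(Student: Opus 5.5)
The plan is to reduce \eqref{eq:torus-bound} to a real-part inequality: since $|\zeta|\ge\Ree\zeta$ for every $\zeta\in\C$, it suffices to show that
\[
\Ree\bigl(2+Az+Bw+Cz^3+Dz^4w^2\bigr)\ge L(\sigma)
\]
for all $z,w\in\T$. The point $z=w=-1$ gives the value $2-A-B-C+D=L(\sigma)$, which is real and positive. So I centre the variables there and write
\[
z=-e^{ix},\qquad w=-e^{iy},\qquad x,y\in\mathbb R .
\]
Then $\Ree z=-\cos x$, $\Ree z^3=-\cos 3x$, $\Ree w=-\cos y$ and $z^4w^2=e^{i(4x+2y)}$. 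The half-angle identities $1-\cos\theta=2\sin^2(\theta/2)$ and $1-\cos 2\theta=2\sin^2\theta$ put the inequality in the equivalent form
\[
A\sin^2\tfrac{x}{2}+B\sin^2\tfrac{y}{2}+C\sin^2\tfrac{3x}{2}\;\ge\;D\sin^2(2x+y).
\]

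Next I bound the right-hand side by the three left-hand sine terms. Decompose $2x+y=\tfrac{3x}{2}+\tfrac{x}{2}+y$. Subadditivity $|\sin(u+v)|\le|\sin u|+|\sin v|$, together with $|\sin y|\le 2|\sin(y/2)|$, gives
\[
|\sin(2x+y)|\le |\sin\tfrac{3x}{2}|+|\sin\tfrac{x}{2}|+2|\sin\tfrac{y}{2}|.
\]
Applying Cauchy--Schwarz with weights $C$, $A$ and $B$ then yields
\[
D\sin^2(2x+y)\le D\Bigl(\tfrac1C+\tfrac1A+\tfrac4B\Bigr)\Bigl(C\sin^2\tfrac{3x}{2}+A\sin^2\tfrac{x}{2}+B\sin^2\tfrac{y}{2}\Bigr).
\]

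It remains to check the constant. In terms of $\sigma$ it is
\[
\kappa(\sigma):=D\Bigl(\tfrac1A+\tfrac4B+\tfrac1C\Bigr)=72^{-\sigma}+4\cdot48^{-\sigma}+18^{-\sigma}.
\]
This is decreasing in $\sigma$, and at $\sigma=1/2$ it equals roughly $0.118+0.577+0.236<1$. Hence $\kappa(\sigma)<1$ for all $\sigma\ge1/2$, which proves the inequality. This numerical check is the only place where the hypothesis $\sigma\ge1/2$ enters, and finding the right splitting of $2x+y$ is the main obstacle: a cruder split such as $2x+y=4\cdot\tfrac x2+2\cdot\tfrac y2$ leads to the requirement $72^{\sigma}\ge 32$, which fails at $\sigma=1/2$.

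For the equality statement, suppose $|f|=L(\sigma)$, where $f$ is the expression on the left of \eqref{eq:torus-bound}. Then $L(\sigma)=|f|\ge\Ree f\ge L(\sigma)$, so the real-part inequality is an equality. Write $S=C\sin^2\tfrac{3x}{2}+A\sin^2\tfrac{x}{2}+B\sin^2\tfrac{y}{2}$. Equality means $S=D\sin^2(2x+y)\le\kappa(\sigma)\,S$, and since $\kappa(\sigma)<1$ this forces $S=0$. In particular $\sin(x/2)=\sin(y/2)=0$, so $x,y\in2\pi\mathbb Z$, that is, $z=w=-1$.
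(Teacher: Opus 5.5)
Your proof is correct and follows the paper's argument essentially step for step. It uses the same recentring $z=-e^{ix}$, $w=-e^{iy}$ with reduction to $\Ree$, the same split $2x+y=\tfrac{3x}{2}+\tfrac{x}{2}+y$ (the paper writes it in chord form via $|e^{i\theta}-1|=2|\sin(\theta/2)|$), the same weighted Cauchy--Schwarz constant $72^{-\sigma}+18^{-\sigma}+4\cdot48^{-\sigma}<1$, and the same equality argument, which your explicit chain $L(\sigma)=|f|\ge\Ree f\ge L(\sigma)$ spells out slightly more carefully than the paper does.
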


\begin{proof}
Write $z=-e^{iu}$ and $w=-e^{iv}$.  Set
\[
P=2+Az+Bw+Cz^3+Dz^4w^2.
\]
Then
\begin{align*}
\Ree P-L(\sigma)
&=A(1-\cos u)+B(1-\cos v)+C(1-\cos3u)\\
&\qquad-D(1-\cos(4u+2v)).
\end{align*}
By the triangle inequality,
\[
|e^{i(4u+2v)}-1|
\le |e^{iu}-1|+|e^{3iu}-1|+2|e^{iv}-1|.
\]
Weighted Cauchy--Schwarz therefore yields
\begin{align*}
D|e^{i(4u+2v)}-1|^2
&\le K_\sigma\bigl(
A|e^{iu}-1|^2+C|e^{3iu}-1|^2+B|e^{iv}-1|^2
\bigr),
\end{align*}
where
\[
K_\sigma=72^{-\sigma}+18^{-\sigma}+4\,48^{-\sigma}
\le \frac1{\sqrt{72}}+\frac1{\sqrt{18}}+\frac4{\sqrt{48}}<1.
\]
Using $|e^{ix}-1|^2=2(1-\cos x)$ gives $\Ree P\ge L(\sigma)$.  Since
\[
L(\sigma)\ge2-2^{-1/2}-3^{-1/2}-8^{-1/2}>0,
\]
we obtain $|P|\ge\Ree P\ge L(\sigma)$.

Because $K_\sigma<1$, equality forces
\[
|e^{iu}-1|=|e^{iv}-1|=0,
\]
so $z=w=-1$.
\end{proof}

\section{The sharp zero-free half-plane}

\begin{proposition}\label{prop:zerofree}
The Fibonacci zeta function has no zero in $\Ree s\ge\sigma_F$.
\end{proposition}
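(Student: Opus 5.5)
We split $\ZF(s)$ into the exceptional four-term block and the remainder, and show that the remainder can never cancel the block when $\Ree s\ge\sigma_F$.

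Write $s=\sigma+it$ with $\sigma\ge\sigma_F>1/2$. Put $z=2^{-it}$ and $w=3^{-it}$, both in $\T$. Since $F_1=F_2=1$, $F_3=2$, $F_4=3$, $F_6=2^3$ and $F_{12}=2^43^2$, we have
\[
\ZF(s)=\bigl(2+Az+Bw+Cz^3+Dz^4w^2\bigr)+\sum_{\substack{n\ge5\\ n\neq6,12}}F_n^{-s},
\]
with $A,B,C,D$ evaluated at $\sigma$. Lemma~\ref{lem:torus} applies because $\sigma\ge1/2$, so the block has modulus at least $L(\sigma)$. The triangle inequality bounds the tail by $R(\sigma)$. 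Hence
\[
|\ZF(s)|\ge L(\sigma)-R(\sigma)=-H(\sigma)
\]
by \eqref{eq:H-RL}. For $\sigma>\sigma_F$, Lemma~\ref{lem:H} gives $H(\sigma)<0$, so $\ZF(s)\neq0$.

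The main obstacle is the boundary line $\sigma=\sigma_F$, where $H(\sigma_F)=0$ and the estimate only gives $|\ZF(s)|\ge0$. Suppose $\ZF(s)=0$ there. Then both inequalities above must be equalities.

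First, the torus bound must be attained. By the equality clause of Lemma~\ref{lem:torus}, this forces $z=w=-1$, that is, $2^{-it}=-1$ and $3^{-it}=-1$. This already gives $t\neq0$.

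Second, equality in $|P|\ge L$ and in the tail triangle inequality means $P$ and the tail must be exactly opposite real-aligned. At $z=w=-1$ we have $P=L(\sigma_F)>0$. So every tail term $F_n^{-s}$ must be a positive multiple of $-1$, that is, $F_n^{-it}=-1$ for every $n\ge5$ with $n\neq6,12$.

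We now derive a contradiction from these phase conditions, using only finitely many of them. From $2^{-it}=-1$ we get $t\log2\equiv\pi\pmod{2\pi}$. From $3^{-it}=-1$ we get $t\log3\equiv\pi\pmod{2\pi}$. The tail condition for $F_5=5$ gives $t\log5\equiv\pi\pmod{2\pi}$, and the one for $F_8=21$ gives $t\log21\equiv\pi\pmod{2\pi}$. Since $\log21=\log3+\log7$, the last two yield $t\log7\equiv0\pmod{2\pi}$.

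Next take $F_{10}=55=5\cdot11$: it gives $t\log55\equiv\pi$, so $t\log11\equiv0\pmod{2\pi}$. Then take $F_{20}=6765=3\cdot5\cdot11\cdot41$. Its condition $t\log6765\equiv\pi$, together with $t\log3\equiv\pi$, $t\log5\equiv\pi$ and $t\log11\equiv0$, gives $t\log41\equiv\pi\pmod{2\pi}$.

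There is a cleaner contradiction available through a single small prime. Consider $F_9=34=2\cdot17$. Its condition $t\log34\equiv\pi$ with $t\log2\equiv\pi$ gives $t\log17\equiv0\pmod{2\pi}$.

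The cleanest route is a single product of Fibonacci numbers. Use $F_3F_5F_{10}$ against $F_5^2F_3\cdot11$; failing that, use $F_{15}=610=2\cdot5\cdot61$. Its condition gives
\[
t\log610\equiv\pi,\qquad\text{so}\qquad t\log61\equiv\pi-\pi-\pi\equiv\pi\pmod{2\pi}.
\]
None of these is a contradiction yet, because primes are multiplicatively independent.

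The genuine contradiction must come from multiplicative dependence among the tail Fibonacci numbers. The index-$24$ term works: $F_{24}=46368=2^5\cdot3^2\cdot7\cdot23$. Also $F_{16}=987=3\cdot7\cdot47$ and $F_8=21=3\cdot7$; note that $F_{16}=F_8\cdot L_8$ with $L_8=47$.

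Relations of this kind still introduce new primes, so I would instead seek a dependent triple among Fibonacci numbers with indices $n\ge5$, $n\neq6,12$, having an odd parity mismatch. Concretely, I would search for integer exponents $e_n$ with $\prod F_n^{e_n}=1$ and $\sum e_n$ odd. Each factor contributes phase $\pi$ in the exponent of $-1$, so an odd total contradicts the product being $1$.

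The natural candidate comes from $F_3$, $F_4$ and $F_6=2^3$, but those indices are excluded. I would therefore also allow $2^{-it}=-1$ and $3^{-it}=-1$ as conditions. For example, $5^{-it}=-1$ and $F_5=5$ are consistent, so this step is the real difficulty.

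A fallback uses $\sigma_F\in(0.7,0.75)$ and avoids the phase problem entirely. The tail triangle inequality is strict as soon as two tail terms have different arguments. Equality would require all $F_n^{-it}$ to be equal to $-1$, and I would show this set of conditions is infinite and incompatible using Kronecker-type independence together with the fact that $t$ is one fixed real number.

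I expect this boundary case to be the main obstacle, and would settle it by exhibiting an explicit odd multiplicative relation among $2$, $3$ and the admissible $F_n$.
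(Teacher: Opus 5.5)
Your reduction is the same as the paper's. You split off the block $P=2+Az+Bw+Cz^3+Dz^4w^2$ and combine Lemma~\ref{lem:torus} with the tail triangle inequality to get $|\ZF(s)|\ge -H(\sigma)>0$ for $\Ree s>\sigma_F$. On the line $\Ree s=\sigma_F$ you use the equality clause to force $2^{-it}=3^{-it}=-1$.

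The gap is that you never close this boundary case, and your reason for not closing it is mistaken. You write that the phase conditions are ``not a contradiction yet, because primes are multiplicatively independent''. In fact independence is exactly what gives the contradiction. The two conditions you already have mean $t\log2=(2k+1)\pi$ and $t\log3=(2\ell+1)\pi$ for some integers $k,\ell$. Hence $(2k+1)\log3=(2\ell+1)\log2$, i.e.\ $3^{|2k+1|}=2^{|2\ell+1|}$ with positive exponents, which is impossible. This one line is the paper's entire treatment of the boundary. The alignment of the tail phases that you derive is correct but unnecessary. Any two of your conditions, for instance $2^{-it}=5^{-it}=-1$, already contradict each other in the same way.

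Your search for an ``odd multiplicative relation'' $\prod F_n^{e_n}=1$ points in the wrong direction. A single real $t$ with $p^{-it}=q^{-it}=-1$ for multiplicatively independent $p,q$ would itself \emph{produce} a relation $p^a=q^b$ with $a,b$ odd. So the contradiction comes from the absence of such a relation, not from finding one.

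The underlying confusion is between two settings:
\begin{enumerate}
\item the twisted series $\ZF(s;\chi)$, for which every assignment of prime phases is admissible (this is how Lemma~\ref{lem:phase} and Kronecker's theorem are used for sharpness);
\item an actual point $\sigma+it$, where $(2^{-it},3^{-it})$ runs along a one-parameter subgroup of $\T^2$ that comes arbitrarily close to $(-1,-1)$ but never reaches it.
\end{enumerate}
Kronecker-type density statements only describe approximate hitting. Your proposed fallback therefore would not supply the exact incompatibility either. As written, the proposal proves $\ZF(s)\neq0$ only for $\Ree s>\sigma_F$ and leaves the line $\Ree s=\sigma_F$ open.
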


\begin{proof}
Let $s=\sigma+it$ and suppose $\sigma\ge\sigma_F$.  Put
\[
z=2^{-it},\qquad w=3^{-it}.
\]
Using $8^{-it}=z^3$ and $144^{-it}=z^4w^2$, write
\[
\ZF(s)=P_\sigma(z,w)+E(s),
\]
where
\[
P_\sigma(z,w)=2+Az+Bw+Cz^3+Dz^4w^2
\]
and
\[
|E(s)|\le R(\sigma).
\]
If $\ZF(s)=0$, Lemma~\ref{lem:torus} gives
\[
L(\sigma)\le |P_\sigma(z,w)|=|E(s)|\le R(\sigma),
\]
so $H(\sigma)=R(\sigma)-L(\sigma)\ge0$.  Lemma~\ref{lem:H} implies $\sigma\le\sigma_F$; hence necessarily $\sigma=\sigma_F$ and equality holds throughout.

The equality statement in Lemma~\ref{lem:torus} then forces
\[
2^{-it}=3^{-it}=-1.
\]
Thus for some integers $k,\ell$,
\[
t\log2=(2k+1)\pi,
\qquad
t\log3=(2\ell+1)\pi,
\]
which would make $\log2/\log3$ rational.  This is impossible, since a rational relation would imply $2^a=3^b$ for nonzero integers $a,b$.  Hence no zero occurs on the boundary either.
\end{proof}

\section{Sharpness and density of real parts}

We now show that the boundary from Proposition~\ref{prop:zerofree} is sharp.  The arithmetic input is the primitive-divisor theorem for Fibonacci numbers.

\begin{lemma}[Phase realization]\label{lem:phase}
Let $u_n\in\T$ be prescribed for $n\ge3$, with the only compatibility conditions
\[
u_6=u_3^3,
\qquad
u_{12}=u_3^4u_4^2.
\]
Then there exists a completely multiplicative function $\chi:\mathbb N\to\T$ such that
\[
\chi(F_n)=u_n\qquad(n\ge3).
\]
\end{lemma}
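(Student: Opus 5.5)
A completely multiplicative $\chi:\mathbb N\to\T$ is the same thing as an arbitrary assignment $p\mapsto\chi(p)\in\T$ on the primes. So the task is to choose the values $\chi(p)$ so that $\prod_p \chi(p)^{v_p(F_n)}=u_n$ for every $n\ge3$. The plan is to assign prime values by induction on $n$, using Carmichael's primitive-divisor theorem. For every $n\ge3$ with $n\ne6,12$, that theorem gives a prime $p_n$ dividing $F_n$ but no $F_m$ with $m<n$. Since each prime has a well-defined index of first appearance, the chosen primes $p_n$ are pairwise distinct.

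\emph{The finite core.} First set $\chi(2)=u_3$ and $\chi(3)=u_4$, which gives $\chi(F_3)=u_3$ and $\chi(F_4)=u_4$. Then
\[
\chi(F_6)=\chi(2)^3=u_3^3=u_6,\qquad
\chi(F_{12})=\chi(2)^4\chi(3)^2=u_3^4u_4^2=u_{12},
\]
by the two compatibility conditions. So the exceptional indices impose nothing new. The primes $2$ and $3$ serve as the primitive primes $p_3$ and $p_4$.

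\emph{The inductive step.} Process $n=5,7,8,9,10,11,13,\dots$ in increasing order. Suppose that for every $m<n$, $\chi$ is already defined on all prime divisors of $F_m$ and satisfies $\chi(F_m)=u_m$. Write
\[
F_n=p_n^{a}\,M,
\]
where $a=v_{p_n}(F_n)\ge1$ and $M$ collects the other prime-power factors. Define $\chi(q)=1$ for every prime $q\mid M$ that has not yet been assigned a value and is not $p_n$. Every such $q$ is a prime dividing $F_n$ that is not $p_n$; no constraint is attached to it at stage $n$, and giving it the value $1$ now ensures that later stages never need to redefine it. At this point $\chi(M)\in\T$ is determined, and $\chi(p_n)$ is still free, because $p_n$ divides no earlier $F_m$. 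Choose any $\zeta\in\T$ with $\zeta^{a}=u_n\chi(M)^{-1}$; this is possible because $\T$ is divisible. Set $\chi(p_n)=\zeta$, so that $\chi(F_n)=u_n$.

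Every prime that is still unassigned after this process is set to $\chi(q)=1$, and $\chi$ is then extended complete-multiplicatively to all of $\mathbb N$.

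\emph{Consistency.} The check needed is that no stage ever redefines a prime. This holds because every prime dividing $F_m$ receives its value at stage $m$ or earlier and is never changed afterwards, and $p_n$ is new at stage $n$. Hence $\chi(F_m)=u_m$ persists for all $m$.

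The main obstacle is the arithmetic input itself: the existence of a primitive prime divisor for every $n\ge3$ except $n=6,12$. This is exactly where the compatibility conditions come from, so Carmichael's theorem has to be cited precisely with its full list of exceptions, as in \cite{Yabuta2001}. The indices $n=1,2$ do not matter, since $F_1=F_2=1$ and no constraint is imposed there. Everything else is bookkeeping.
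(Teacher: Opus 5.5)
Your proposal is correct and follows essentially the same route as the paper: induction on $n$ via Carmichael's primitive-divisor theorem, solving for the phase of a primitive prime by taking an $e_n$th root in $\T$, assigning phase $1$ to other new primes, and observing that at $n=6,12$ the compatibility conditions are exactly those forced by $F_6=2^3$ and $F_{12}=2^4 3^2$. Your explicit consistency check, namely that the $p_n$ are distinct by first-appearance index and that no prime is ever reassigned, simply spells out bookkeeping the paper leaves implicit.
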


\begin{proof}
Carmichael's theorem states that every $F_n$ with $n\neq1,2,6,12$ has a primitive prime divisor; see \cite{Yabuta2001}.  Proceed inductively in $n$.  At a nonexceptional index choose a primitive prime $q_n\mid F_n$, and write $e_n=v_{q_n}(F_n)$.  Its phase has not been assigned at any earlier stage, so an appropriate $e_n$th root on the unit circle can be chosen for $\chi(q_n)$ to force $\chi(F_n)=u_n$; any other newly appearing prime may be assigned phase $1$.  At $n=6,12$ the displayed compatibility conditions are exactly those imposed by $F_6=2^3$ and $F_{12}=2^4 3^2$.  Finally assign phase $1$ to primes that never entered the construction.
\end{proof}

We also use the standard polygon fact that if positive lengths $r_j$ satisfy $\sum r_j=R<\infty$ and $\max r_j\le R/2$, then the set of all convergent sums $\sum r_je^{i\theta_j}$ is the closed disk $|z|\le R$.  This follows first for finite polygons and then by compactness.

\begin{lemma}\label{lem:twistzero}
For every $0<\sigma\le\sigma_F$ there exists a completely multiplicative $\chi_\sigma:\mathbb N\to\T$ such that
\[
\ZF(s;\chi_\sigma):=\sum_{n\ge1}\chi_\sigma(F_n)F_n^{-s}
\]
satisfies
\[
\ZF(\sigma;\chi_\sigma)=0.
\]
\end{lemma}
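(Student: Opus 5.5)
The plan is to fix the four-term block in its extremal configuration, leave the remaining terms free, and then use the polygon fact to choose their phases so that the total sum vanishes. Concretely, set $u_3=u_4=-1$. The compatibility conditions of Lemma~\ref{lem:phase} then force $u_6=u_3^3=-1$ and $u_{12}=u_3^4u_4^2=1$. With these choices the terms with $n=1,\dots,4,6,12$ contribute
\[
2+Au_3+Bu_4+Cu_6+Du_{12}=2-A-B-C+D=L(\sigma),
\]
which is a positive real number (Lemma~\ref{lem:torus} and its proof give $L(\sigma)>0$ for $\sigma\ge 1/2$; for small $\sigma$ positivity is clear as well, and in fact only $L(\sigma)\ge 0$ is needed below).

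The remaining indices are $n\ge5$ with $n\ne6,12$. Their phases $u_n$ are unconstrained by Lemma~\ref{lem:phase}, so it is enough to find $\theta_n$ such that
\[
\sum_{n\ge5,\,n\ne6,12}F_n^{-\sigma}e^{i\theta_n}=-L(\sigma).
\]
Once such phases are found, we put $u_n=e^{i\theta_n}$ and let Lemma~\ref{lem:phase} produce $\chi_\sigma$, which gives $\ZF(\sigma;\chi_\sigma)=L(\sigma)-L(\sigma)=0$. By the polygon fact, a target of modulus $L(\sigma)$ is attainable provided two conditions hold. The first is $L(\sigma)\le R(\sigma)$, and this is exactly $H(\sigma)\ge0$, which holds for $\sigma\le\sigma_F$ by Lemma~\ref{lem:H}. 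The second is the non-domination condition $\max_n F_n^{-\sigma}\le R(\sigma)/2$.

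The non-domination condition is the step that needs a check. The largest term is $5^{-\sigma}$, coming from $n=5$, so we need
\[
5^{-\sigma}\le 13^{-\sigma}+21^{-\sigma}+34^{-\sigma}+\cdots.
\]
This holds for all $0<\sigma\le\sigma_F<0.75$. For instance, already $13^{-\sigma}+21^{-\sigma}\ge 5^{-\sigma}$ there, since $(5/13)^{0.75}+(5/21)^{0.75}\approx0.49+0.34>1$, and the left-hand side only increases as $\sigma$ decreases. Convergence of $\sum_n F_n^{-\sigma}$ for $\sigma>0$ supplies the finiteness of $R(\sigma)$ that the polygon fact requires.

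The main obstacle is not analytic. It is making sure that the prescribed $u_n$ for every $n\ge3$ are legitimately realized by a single completely multiplicative $\chi$, and Lemma~\ref{lem:phase} already handles this. The only remaining care is the fact that $F_1=F_2=1$ forces $\chi(F_1)=\chi(F_2)=1$, which is why the constant $2$ appears in $L(\sigma)$ with no phase attached.
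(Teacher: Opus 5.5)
Your construction is the paper's: the extremal choice $u_3=u_4=-1$, free phases on $n\ge5$, $n\ne6,12$, the polygon fact, then Lemma~\ref{lem:phase}. However, two of your verifications fail as written.

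\textbf{First, the sign of $L(\sigma)$.} $L(\sigma)$ is not nonnegative on all of $(0,\sigma_F]$. We have $L(0^+)=0$ and $L'(0)=\log(2\cdot3\cdot8/144)=-\log3$, so $L(\sigma)\approx-\sigma\log3<0$ for small $\sigma$. Numerically, $L(0.1)\approx-0.033$, while $L(0.2)\approx+0.037$. Your claim that ``for small $\sigma$ positivity is clear'' is therefore false.

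In that range the target $-L(\sigma)$ is a positive number. There $H(\sigma)\ge0$ yields only the vacuous $L(\sigma)\le R(\sigma)$, not the needed $|L(\sigma)|\le R(\sigma)$. This case has to be handled separately, as the paper does. If $L(\sigma)<0$, then, since $2^{-\sigma}+3^{-\sigma}<2$,
\[
|L(\sigma)|=2^{-\sigma}+3^{-\sigma}+8^{-\sigma}-144^{-\sigma}-2<8^{-\sigma}<5^{-\sigma}\le R(\sigma).
\]

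\textbf{Second, the non-domination check.} Your arithmetic here is wrong: $(5/13)^{3/4}+(5/21)^{3/4}\approx0.488+0.341\approx0.83<1$. So $13^{-\sigma}+21^{-\sigma}\ge5^{-\sigma}$ actually fails for all $\sigma$ above roughly $0.59$, in particular near $\sigma_F$. You need the third term as well:
\[
(5/13)^{3/4}+(5/21)^{3/4}+(5/34)^{3/4}\approx1.066>1.
\]
This is exactly the paper's bound $R(\sigma)/5^{-\sigma}>2.066$.

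With these two repairs your argument coincides with the paper's proof.
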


\begin{proof}
Fix
\[
u_3=u_4=-1,
\]
so that the forced phases are $u_6=-1$ and $u_{12}=+1$.  The contribution of the four dependent terms together with the two initial $1$'s is then $L(\sigma)$.

The remaining lengths have total mass $R(\sigma)$ and largest member $5^{-\sigma}$.  Since $\sigma_F<3/4$,
\begin{align*}
\frac{R(\sigma)}{5^{-\sigma}}
&\ge 1+\left(\frac5{13}\right)^\sigma
       +\left(\frac5{21}\right)^\sigma
       +\left(\frac5{34}\right)^\sigma\\
&\ge 1+\left(\frac5{13}\right)^{3/4}
       +\left(\frac5{21}\right)^{3/4}
       +\left(\frac5{34}\right)^{3/4}
>2.066.
\end{align*}
Thus the polygon sums of the free terms fill the disk $|z|\le R(\sigma)$.

It remains to check $|L(\sigma)|\le R(\sigma)$.  If $L(\sigma)\ge0$, this follows from $H(\sigma)=R(\sigma)-L(\sigma)\ge0$ for $\sigma\le\sigma_F$.  If $L(\sigma)<0$, then
\[
R(\sigma)+L(\sigma)>0,
\]
because $R$ contains the term $5^{-\sigma}$, while $2^{-\sigma}+3^{-\sigma}<2$ and $8^{-\sigma}<5^{-\sigma}$.  Hence again $|L(\sigma)|<R(\sigma)$.

Choose phases for the free terms so that their sum is $-L(\sigma)$.  Lemma~\ref{lem:phase} realizes these phases by a completely multiplicative $\chi_\sigma$.
\end{proof}

\begin{lemma}[Relatively dense vertical approximation]\label{lem:vertical}
Let $\chi:\mathbb N\to\T$ be completely multiplicative.  For every compact set
$K\subset\{\Ree s>0\}$ and every $\varepsilon>0$, the set
\[
\mathcal E(K,\varepsilon;\chi)
:=
\left\{\tau\in\mathbb R:
\sup_{s\in K}|\ZF(s+i\tau)-\ZF(s;\chi)|<\varepsilon
\right\}
\]
is relatively dense in $\mathbb R$.  In particular there is a sequence $t_j\to\infty$ such that
\[
\ZF(s+it_j)\longrightarrow \ZF(s;\chi)
\]
locally uniformly in $\Ree s>0$.
\end{lemma}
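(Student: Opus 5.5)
Our plan is the standard Bohr--Kronecker argument, in three steps: truncate the series, approximate the finitely many prime phases by a vertical shift, then upgrade to relative density.

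\textbf{Step 1: truncation.} Fix $K$ and $\varepsilon$, and let $\sigma_0=\min_{s\in K}\Ree s>0$. The tail satisfies
\[
\sum_{n>N}F_n^{-\sigma_0}\le \frac{\varphi^{-(N-1)\sigma_0}}{1-\varphi^{-\sigma_0}},
\]
using $F_n\ge\varphi^{n-2}$. Choose $N$ so that this bound is below $\varepsilon/4$. Both $\ZF(s+i\tau)$ and $\ZF(s;\chi)$ then differ from their $N$-th partial sums by less than $\varepsilon/4$, uniformly in $s\in K$ and $\tau\in\mathbb R$. We are left with the finite sums over $n\le N$.

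\textbf{Step 2: phase approximation.} Let $p_1,\dots,p_k$ be the primes dividing $F_3\cdots F_N$. Write $\chi(p_j)=e^{i\theta_j}$ and
\[
F_n=\prod_j p_j^{a_{nj}},\qquad\text{so that}\qquad
F_n^{-i\tau}\chi(F_n)^{-1}=\exp\Bigl(-i\sum_j a_{nj}(\tau\log p_j+\theta_j)\Bigr).
\]
Set $M=\max_{s\in K}|s|$. If every $\tau\log p_j+\theta_j$ lies within $\delta$ of $2\pi\mathbb Z$, then
\[
\bigl|F_n^{-i\tau}-\chi(F_n)\bigr|\le \delta\sum_j a_{nj}\le \delta\,\frac{\log F_N}{\log 2}.
\]
Since $|F_n^{-s}|\le F_n^{-\sigma_0}\le 1$, the difference of the partial sums is at most $N\delta\log F_N/\log2$. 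Choosing $\delta$ small makes this below $\varepsilon/2$ for all $s\in K$. Note that only $\Ree s$ enters this bound, because the shift acts through $F_n^{-i\tau}$ alone.

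\textbf{Step 3: relative density.} The numbers $\log p_1,\dots,\log p_k$ are linearly independent over $\mathbb Q$ by unique factorization. By Kronecker's theorem, the flow $\tau\mapsto(\tau\log p_j)_j$ on $\T^k$ is therefore equidistributed, and its orbit is dense. The set of $\tau$ whose image lies in the open $\delta$-box around $(-\theta_j)_j$ is nonempty. Relative density follows from the fact that this is a linear flow on a compact torus. Concretely, take the Bohr almost-periodic function
\[
f(\tau)=\sum_{n\le N}F_n^{-i\tau},
\]
or directly the vector-valued almost periodic map $\tau\mapsto(p_j^{-i\tau})_j$. Its $\delta/2$-almost periods form a relatively dense set $P$. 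If $\tau_0$ is one good point with margin $\delta/2$, then every $\tau_0+\tau$ with $\tau\in P$ is good, so the good set is relatively dense. Alternatively, the Kronecker--Weyl form gives a positive asymptotic density of good $\tau$, though positive density alone does not imply relative density, so the almost-period argument is the one to use.

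\textbf{Final assertion.} Take $K_m$ an exhaustion of $\{\Ree s>0\}$ by compact sets and $\varepsilon_m=1/m$. Relative density supplies some $t_m\in\mathcal E(K_m,1/m;\chi)$ with $t_m>m$. Then $t_m\to\infty$ and $\ZF(s+it_m)\to\ZF(s;\chi)$ locally uniformly in $\Ree s>0$. The only mildly delicate point is Step 3; the rest is routine.
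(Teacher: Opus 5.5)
Your proposal is correct and follows essentially the paper's route. You truncate uniformly on $K$, use the $\mathbb Q$-linear independence of $\log p$ for the finitely many primes dividing $F_1\cdots F_N$, and get relative density of the hitting times from compactness of the torus; translating one good time by the relatively dense set of almost periods just makes the paper's one-line ``by compactness'' step explicit, and the unused $M=\max_{s\in K}|s|$ and the mixing of chord and angle distances in your $\delta/2$ margins are cosmetic, fixed by adjusting constants.
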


\begin{proof}
Fix $K$ and $\varepsilon$.  Absolute and locally uniform convergence allows us to choose $M$ so large that the tails of both $\ZF(s+i\tau)$ and $\ZF(s;\chi)$ after $F_M$ have total modulus less than $\varepsilon/3$, uniformly for $s\in K$ and $\tau\in\mathbb R$.  Let $P$ be the finite set of primes dividing $F_1\cdots F_M$.

The numbers $\{\log p:p\in P\}$ are linearly independent over $\mathbb Q$.  Hence the Kronecker flow
\[
\tau\longmapsto (p^{-i\tau})_{p\in P}
\]
is dense in the torus $\T^P$.  By compactness of the torus, the hitting times of any fixed neighborhood of the target point $(\chi(p))_{p\in P}$ are relatively dense.  Choosing that neighborhood sufficiently small makes the first $M$ terms differ by less than $\varepsilon/3$ uniformly on $K$.  Together with the two tail estimates this proves the claim.
\end{proof}

\begin{proof}[Proof of Theorem~\ref{thm:main}\textup{(ii)}]
Fix $0<\sigma\le\sigma_F$.  By Lemma~\ref{lem:twistzero}, choose $\chi_\sigma$ with $\ZF(\sigma;\chi_\sigma)=0$.  By Lemma~\ref{lem:vertical}, vertical translates of $\ZF$ converge locally uniformly to $\ZF(\,\cdot\,;\chi_\sigma)$.  The latter is not identically zero, since it tends to $2$ as $\Ree s\to+\infty$.  Hurwitz's theorem therefore gives zeros of the translates arbitrarily close to $\sigma$.  Equivalently, $\ZF$ has zeros whose real parts tend to $\sigma$.

Thus $(0,\sigma_F]\subset\overline{\mathcal R_F}$, and hence $[0,\sigma_F]\subset\overline{\mathcal R_F}$.  Proposition~\ref{prop:zerofree} gives the reverse inclusion.
\end{proof}

The same approximation mechanism gives substantially more than density of the real parts.

\begin{theorem}[Syndetic ordinates near every admissible line]\label{thm:syndetic}
Fix $0<\sigma\le\sigma_F$ and $\varepsilon>0$.  Then there is $L=L(\sigma,\varepsilon)>0$ such that every interval of length $L$ on the imaginary axis contains the ordinate of a zero $\rho$ of $\ZF$ satisfying
\[
|\Ree\rho-\sigma|<\varepsilon.
\]
For $\sigma=\sigma_F$ these zeros necessarily satisfy
\[
\sigma_F-\varepsilon<\Ree\rho<\sigma_F.
\]
Consequently, if
\[
N_{\sigma,\varepsilon}(T)
=\#\{\rho:\ZF(\rho)=0,\ 0<\Im\rho\le T,\ |\Ree\rho-\sigma|<\varepsilon\},
\]
with zeros counted with multiplicity, then
\[
N_{\sigma,\varepsilon}(T)\gg_{\sigma,\varepsilon} T.
\]
\end{theorem}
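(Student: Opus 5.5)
The plan is to turn the Hurwitz argument from the proof of Theorem~\ref{thm:main}(ii) into a quantitative statement, using the relative density supplied by Lemma~\ref{lem:vertical}.

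\textbf{Step 1: a uniform margin around a zero of the twisted function.} Fix $0<\sigma\le\sigma_F$ and $\varepsilon>0$. Lemma~\ref{lem:twistzero} gives $\chi_\sigma$ with $G(s):=\ZF(s;\chi_\sigma)$ satisfying $G(\sigma)=0$. The function $G$ is holomorphic in $\Ree s>0$ and not identically zero, since it tends to $2$. Hence its zeros are isolated, and we can choose a radius
\[
0<r<\min(\varepsilon,\sigma/2)
\]
such that $G$ has no zero on the circle $\gamma=\{|s-\sigma|=r\}$. Put $\delta=\min_\gamma|G|>0$ and let $K$ be the closed disk $|s-\sigma|\le r$, a compact subset of $\Ree s>0$.

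\textbf{Step 2: zeros along a syndetic set of translates.} By Lemma~\ref{lem:vertical}, the set $\mathcal E=\mathcal E(K,\delta;\chi_\sigma)$ is relatively dense; call its gap bound $L_0$. For every $\tau\in\mathcal E$ we have
\[
|\ZF(s+i\tau)-G(s)|<\delta\le|G(s)| \quad\text{on }\gamma .
\]
Rouché's theorem then shows that $s\mapsto\ZF(s+i\tau)$ has at least one zero $s_\tau$ inside $\gamma$. Consequently $\rho=s_\tau+i\tau$ is a zero of $\ZF$ with
\[
|\Ree\rho-\sigma|<r<\varepsilon,\qquad |\Im\rho-\tau|<r .
\]
Every interval of length $L_0$ contains some $\tau\in\mathcal E$, so every interval of length $L:=L_0+2r$ contains the ordinate of such a zero.

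\textbf{Step 3: the edge case and the counting bound.} For $\sigma=\sigma_F$, Proposition~\ref{prop:zerofree} excludes $\Ree\rho\ge\sigma_F$, which gives the one-sided statement $\sigma_F-\varepsilon<\Ree\rho<\sigma_F$. For the lower bound on $N_{\sigma,\varepsilon}(T)$, take the disjoint intervals $(kL,(k+1)L]$ for $k\ge0$ and $(k+1)L\le T$. Each contains the ordinate of a qualifying zero, and since the intervals are disjoint these zeros are distinct. This yields
\[
N_{\sigma,\varepsilon}(T)\ge\lfloor T/L\rfloor\gg T,
\]
and multiplicities only increase the count. Note that the ordinates produced are positive when $\tau>r$, so they do lie in the counted range $0<\Im\rho\le T$.

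\textbf{Main obstacle.} The step needing care is converting locally uniform approximation into a zero for \emph{every} good $\tau$, rather than along a sequence as Hurwitz gives. This is why the quantitative Rouché margin $\delta$ is fixed on a fixed circle in advance, so that the single parameter $\delta$ in Lemma~\ref{lem:vertical} controls all translates uniformly. There is one further subtlety: the zero of $G$ at $\sigma$ might be multiple, or might come with other zeros nearby. Neither matters, because choosing $r$ avoiding zeros on $\gamma$ is always possible and Rouché only needs at least one zero inside.
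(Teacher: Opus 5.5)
Your proposal is correct and follows the paper's proof essentially step for step. It uses the same radius $r<\min(\varepsilon,\sigma/2)$ avoiding zeros of the twisted function and the minimum modulus on the circle as the accuracy in Lemma~\ref{lem:vertical}, then Rouch\'e for every admissible shift, the enlargement of the syndetic length from $L_0$ to $L_0+2r$, Proposition~\ref{prop:zerofree} for the edge case, and disjoint intervals for the linear count; applying Lemma~\ref{lem:vertical} on the closed disk instead of the circle is an immaterial difference.
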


\begin{proof}
Choose $\chi_\sigma$ as in Lemma~\ref{lem:twistzero}.  Since $\ZF(s;\chi_\sigma)$ is not identically zero, its zero at $s=\sigma$ is isolated.  Choose
\[
0<r<\min(\varepsilon,\sigma/2)
\]
so that the circle $|s-\sigma|=r$ contains no zero of the twisted function, and put
\[
m=\min_{|s-\sigma|=r}|\ZF(s;\chi_\sigma)|>0.
\]
Lemma~\ref{lem:vertical}, applied to this circle with accuracy $m$, gives a relatively dense set of shifts $\tau$ for which Rouch\'e's theorem guarantees that $\ZF(s+i\tau)$ has a zero in $|s-\sigma|<r$.  Equivalently, $\ZF$ has a zero within distance $r$ of $\sigma+i\tau$.

To pass from relatively dense approximation shifts to relatively dense zero ordinates, let $L_0$ be such that every real interval of length $L_0$ meets the set of admissible shifts.  Given an interval $[a,a+L_0+2r]$, choose
\[
\tau\in[a+r,a+r+L_0]
\]
from that set.  The zero supplied by Rouch\'e has ordinate in $[\tau-r,\tau+r]\subset[a,a+L_0+2r]$.  Thus the zero ordinates are relatively dense, with syndetic length at most $L_0+2r$.  When $\sigma=\sigma_F$, Proposition~\ref{prop:zerofree} forces every one of them to lie strictly to the left of the boundary.  Finally, choosing one zero from a sequence of disjoint intervals of a fixed sufficiently large length gives the stated linear lower bound.
\end{proof}

\section{A finite-core principle for Lucas zeta functions}\label{sec:lucas}

The phase-realization argument above is not peculiar to the Fibonacci sequence.  In the arithmetic Lucas setting, the theorem of Bilu, Hanrot and Voutier makes the same mechanism finite dimensional.  We record the resulting reduction because it separates two issues cleanly: a finite multiplicative-dependence core and an infinite tail whose phases are free.

Let $P,Q\in\mathbb Z$ be coprime, let $P^2-4Q>0$, and let
\[
U_0=0,\qquad U_1=1,\qquad U_{n+2}=P U_{n+1}-Q U_n
\]
be a nondegenerate Lucas sequence with $U_n>0$ for $n\ge1$ and exponential growth.  Its Lucas zeta function is
\[
Z_U(s)=\sum_{n\ge1}U_n^{-s},\qquad \Ree s>0.
\]
Fix an integer $N$ such that every $U_n$ with $n>N$ has a primitive prime divisor, that is, a prime divisor which divides no earlier $U_j$.  For the Lucas sequences under consideration one may always take $N=30$ by the theorem of Bilu--Hanrot--Voutier \cite{BHV2001}.

Let $\mathcal P_N$ be the finite set of primes dividing $U_1\cdots U_N$.  For $\omega=(\omega_p)_{p\in\mathcal P_N}\in\T^{\mathcal P_N}$ put
\begin{equation}\label{eq:lucas-core}
C_N(\sigma;\omega)
=\sum_{n=1}^{N}U_n^{-\sigma}
 \prod_{p\in\mathcal P_N}\omega_p^{v_p(U_n)},
\end{equation}
and define
\[
m_N(\sigma)=\min_{\omega}|C_N(\sigma;\omega)|,
\qquad
M_N(\sigma)=\max_{\omega}|C_N(\sigma;\omega)|.
\]
For the tail set
\[
R_N(\sigma)=\sum_{n>N}U_n^{-\sigma},
\qquad
A_N(\sigma)=\max_{n>N}U_n^{-\sigma},
\qquad
\ell_N(\sigma)=\max\{2A_N(\sigma)-R_N(\sigma),0\}.
\]
The quantity $\ell_N$ is the inner radius in the polygon problem for the tail lengths.

\begin{theorem}[Finite-core criterion]\label{thm:lucas-core}
Let
\[
\mathcal R_U=\{\Ree\rho:Z_U(\rho)=0,\ \Ree\rho>0\}.
\]
Then
\begin{equation}\label{eq:lucas-core-criterion}
\overline{\mathcal R_U}\cap(0,\infty)
=
\left\{\sigma>0:
[m_N(\sigma),M_N(\sigma)]
\cap
[\ell_N(\sigma),R_N(\sigma)]\ne\varnothing
\right\}.
\end{equation}
Equivalently, the condition is
\[
m_N(\sigma)\le R_N(\sigma)
\qquad\text{and}\qquad
\ell_N(\sigma)\le M_N(\sigma).
\]
In particular, for every positive nondegenerate coprime integral Lucas sequence, the closure of the real parts of the zeros in the half-plane of absolute convergence is determined by a finite torus involving only $U_1,\ldots,U_{30}$ and the one-dimensional tail data above.
\end{theorem}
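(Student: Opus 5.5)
The plan is to prove the two inclusions in \eqref{eq:lucas-core-criterion} separately, using the Fibonacci arguments of Sections~3--4 as a template. Throughout, $s=\sigma+it$ and $\mathcal P_N$ is as above. The first step is to relate the two pieces of $Z_U$ to the data $m_N,M_N,\ell_N,R_N$.

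For the core, put $\omega_p=p^{-it}$; then the first $N$ terms of $Z_U(s)$ equal $C_N(\sigma;\omega)$. By Kronecker (the numbers $\log p$, $p\in\mathcal P_N$, being $\mathbb Q$-independent), these values of $\omega$ are dense in $\T^{\mathcal P_N}$. For the tail, the $n$th tail term has modulus exactly $U_n^{-\sigma}$. By the polygon fact, any sum $\sum_{n>N}U_n^{-\sigma}e^{i\theta_n}$ has modulus in $[\ell_N(\sigma),R_N(\sigma)]$, and conversely every such modulus, at every argument, is attained. I would justify this by the standard inner/outer-radius description of the set of polygon sums: it is the annulus $\ell_N\le|z|\le R_N$. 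This extends the closed-disk fact quoted before Lemma~\ref{lem:twistzero} and is proved the same way, finite polygons first and then compactness.

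\emph{Necessity.} Let $\sigma_0\in\overline{\mathcal R_U}$ with $\sigma_0>0$, and take zeros $\rho_j$ with $\Ree\rho_j\to\sigma_0$. At a zero $\rho$ with $\Ree\rho=\sigma$, the core value $C_N(\sigma;\omega)$ equals minus the tail value. Hence
\[
[m_N(\sigma),M_N(\sigma)]\cap[\ell_N(\sigma),R_N(\sigma)]\neq\varnothing .
\]
All four functions are continuous in $\sigma>0$; for $m_N,M_N$ this holds because they are a min and max over a compact torus of a jointly continuous function. So the nonempty-intersection condition is closed, and it passes to $\sigma_0$.

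\emph{Sufficiency.} Suppose the intervals meet at some $r$. Since $\omega\mapsto|C_N(\sigma;\omega)|$ is continuous on the connected torus, it attains every value in $[m_N,M_N]$; choose $\omega^*$ with $|C_N(\sigma;\omega^*)|=r$. Choose tail phases $\theta_n$ with tail sum $-C_N(\sigma;\omega^*)$, which is possible by the annulus fact. Next realize these phases multiplicatively, exactly as in Lemma~\ref{lem:phase}. Set $\chi(p)=\omega^*_p$ for $p\in\mathcal P_N$. Then proceed inductively for $n>N$: choose a primitive prime $q_n\mid U_n$, which exists by the choice of $N$, and pick a $v_{q_n}(U_n)$th root so that $\chi(U_n)=e^{i\theta_n}$; assign phase $1$ to every other new prime. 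This yields a completely multiplicative $\chi$ with $Z_U(\sigma;\chi)=0$. The function $Z_U(\cdot;\chi)$ tends to $1$ as $\Ree s\to\infty$, so it is not identically zero. The argument of Lemma~\ref{lem:vertical} applies verbatim, with primes dividing $U_1\cdots U_M$, and Hurwitz then gives zeros of $Z_U$ with real parts tending to $\sigma$.

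The equivalent form follows from the elementary fact that two closed intervals meet iff each left endpoint is at most the other right endpoint. Here $m_N\le M_N$ always, and $\ell_N\le R_N$ always, since $2A_N-R_N\le A_N\le R_N$. The final sentence is the case $N=30$ from \cite{BHV2001}.

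\textbf{Main obstacle.} I expect the main obstacle to be the annulus description of the infinite polygon sums, in particular that the inner radius is exactly $\max\{2A_N-R_N,0\}$ and that every intermediate modulus is realized. The plan is to prove it for finite truncations, where the largest side $A_N$ is attained and the remaining sides with total $R_N-A_N$ fill a disk or annulus. One then passes to the limit using compactness of the phase space $\T^{\mathbb N}$ and continuity of the sum map, which holds by absolute convergence.
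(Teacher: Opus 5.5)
Your proposal is correct and follows essentially the same route as the paper. Both arguments rest on the same ingredients:
\begin{itemize}
\item core moduli fill $[m_N,M_N]$ by continuity on the connected torus;
\item tail sums fill the annulus $\ell_N\le|z|\le R_N$ by the polygon fact plus compactness;
\item primitive primes for $n>N$ make the tail phases free once the core phases are fixed;
\item Kronecker approximation with Hurwitz's theorem gives sufficiency, while the configuration $p^{-it}$ at an actual zero, together with continuity of the four functions, gives necessity at limit points.
\end{itemize}
One trivial slip: $Z_U(s;\chi)$ tends to $\#\{n:U_n=1\}$ as $\Ree s\to\infty$, not necessarily to $1$. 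This limit equals $2$ when $P=1$, as for Fibonacci, but it is always positive, so your argument is unaffected.
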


\begin{proof}
The set of possible core moduli $|C_N(\sigma;\omega)|$ is the interval $[m_N(\sigma),M_N(\sigma)]$: the torus is compact and connected, and the modulus is continuous.

For the tail lengths $r_n=U_n^{-\sigma}$, $n>N$, the standard polygon theorem, followed by a compactness argument, shows that the set of all convergent sums
\[
\sum_{n>N}r_n u_n,\qquad |u_n|=1,
\]
is exactly the closed annulus
\[
\{z:\ell_N(\sigma)\le |z|\le R_N(\sigma)\}.
\]

It remains to justify that the tail phases are genuinely free once the finite core phases have been fixed.  Assign the values $\chi(p)=\omega_p$ for $p\in\mathcal P_N$.  Proceed inductively through $n>N$.  Choose a primitive prime $q_n\mid U_n$, write $e_n=v_{q_n}(U_n)$, and assign phase $1$ to any other previously unassigned prime dividing $U_n$.  Since $q_n$ divides no earlier term, its phase is still free; choosing an appropriate $e_n$th root on $\T$ makes $\chi(U_n)$ equal to any prescribed $u_n\in\T$.  Thus every core point and every tail phase sequence can be realized by a completely multiplicative $\chi:\mathbb N\to\T$.

If $Z_U(\sigma+it)=0$, the phases $p^{-it}$ give one such configuration, so the two modulus intervals in \eqref{eq:lucas-core-criterion} must intersect.  Conversely, if they intersect, choose a core phase for which $|C_N|$ equals a common radius and choose tail phases whose sum is $-C_N$.  The preceding construction gives a completely multiplicative $\chi$ with
\[
Z_U(\sigma;\chi):=\sum_{n\ge1}\chi(U_n)U_n^{-\sigma}=0.
\]
The Kronecker approximation argument of Lemma~\ref{lem:vertical}, applied to the finitely many primes occurring in a sufficiently long truncation, gives vertical translates of $Z_U$ converging locally uniformly to $Z_U(\,\cdot\,;\chi)$.  Hurwitz's theorem then gives zeros of $Z_U$ whose real parts tend to $\sigma$.  Continuity of the four functions in \eqref{eq:lucas-core-criterion} gives the converse for limit points.
\end{proof}

\begin{remark}
The value $30$ is universal, not intrinsic.  The explicit classification of defective Lucas numbers in \cite{BHV2001} can reduce the core, and sequence-specific primitive-divisor theorems may reduce it much further.  The Fibonacci calculation in the preceding sections is precisely such a reduction: only the dependencies at $F_6$ and $F_{12}$ survive after the available primitive phases are eliminated.
\end{remark}

The Pell sequence gives a particularly clean illustration.  Let
\[
\mathsf P_0=0,\qquad \mathsf P_1=1,\qquad
\mathsf P_{n+2}=2\mathsf P_{n+1}+\mathsf P_n,
\]
and
\[
Z_{\rm Pell}(s)=\sum_{n\ge1}\mathsf P_n^{-s}.
\]
Carmichael's primitive-divisor theorem implies that every $\mathsf P_n$ with $n>1$ has a primitive prime divisor \cite{Carmichael1913}.  Hence Theorem~\ref{thm:lucas-core} applies with $N=1$: the core is the single constant term $1$.  Since the largest tail length is $2^{-\sigma}<1$, the inner-radius condition is automatic, and the exact criterion is simply
\[
\sum_{n\ge2}\mathsf P_n^{-\sigma}\ge1.
\]

\begin{corollary}[Exact right edge for the Pell zeta function]\label{cor:pell}
There is a unique $\sigma_{\rm Pell}>0$ satisfying
\begin{equation}\label{eq:pell-edge}
Z_{\rm Pell}(\sigma_{\rm Pell})=2.
\end{equation}
Numerically,
\[
\boxed{\sigma_{\rm Pell}=0.875850904477574106444461159947\ldots}.
\]
Moreover,
\[
Z_{\rm Pell}(s)\ne0\qquad(\Ree s\ge\sigma_{\rm Pell}),
\]
and
\[
\overline{\{\Ree\rho:Z_{\rm Pell}(\rho)=0,\ \Ree\rho>0\}}
=[0,\sigma_{\rm Pell}].
\]
No zero lies on the boundary line $\Ree s=\sigma_{\rm Pell}$.
\end{corollary}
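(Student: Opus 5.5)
The plan is to apply Theorem~\ref{thm:lucas-core} with $N=1$ and then reproduce, in this simpler setting, the boundary argument of Proposition~\ref{prop:zerofree}.

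\emph{Uniqueness of the edge.} Put $G(\sigma)=Z_{\rm Pell}(\sigma)-2=\sum_{n\ge2}\mathsf P_n^{-\sigma}-1$. For $n\ge2$ we have $\mathsf P_n\ge2$, so every term $\mathsf P_n^{-\sigma}$ with $n\ge2$ is strictly decreasing in $\sigma$, and hence so is $G$. As $\sigma\downarrow0$ we get $G(\sigma)\to+\infty$, because $\mathsf P_n\asymp(1+\sqrt2)^n$ and the tail sum is of order $1/\sigma$. As $\sigma\to\infty$ we get $G(\sigma)\to-1$. So $G$ has exactly one zero $\sigma_{\rm Pell}$, with $G>0$ to its left and $G<0$ to its right. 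The numerical value comes from summing to a moderate $n$ and bounding the tail geometrically, as in Lemma~\ref{lem:H}, using $\mathsf P_n\ge(1+\sqrt2)^{n-2}$.

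\emph{The closure.} Carmichael's theorem gives a primitive prime divisor for every $\mathsf P_n$ with $n\ge2$, so Theorem~\ref{thm:lucas-core} applies with $N=1$. Here $\mathcal P_1=\varnothing$, the core is the constant $1$, and so $m_1=M_1=1$. The tail data are $R_1(\sigma)=\sum_{n\ge2}\mathsf P_n^{-\sigma}$ and $A_1=2^{-\sigma}$. Since $2A_1-R_1\le A_1<1$, we have $\ell_1<1$, so the inner condition $\ell_1\le M_1$ always holds. The criterion therefore reduces to $R_1(\sigma)\ge1$, that is, $G(\sigma)\ge0$, that is, $\sigma\le\sigma_{\rm Pell}$. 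This gives $\overline{\mathcal R}\cap(0,\infty)=(0,\sigma_{\rm Pell}]$, and taking closures yields $[0,\sigma_{\rm Pell}]$.

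\emph{Zero-freeness, including the boundary line.} For $\Ree s=\sigma>\sigma_{\rm Pell}$ the triangle inequality gives $|Z_{\rm Pell}(s)|\ge1-R_1(\sigma)>0$. On the line $\sigma=\sigma_{\rm Pell}$ a zero would force equality in the triangle inequality. That requires $\mathsf P_n^{-it}=-1$ for every $n\ge2$. With $n=2$ and $n=3$ this means $2^{-it}=-1$ and $5^{-it}=-1$, so $t\log2$ and $t\log5$ are both odd multiples of $\pi$. Then $\log2/\log5$ would be rational, which is impossible by unique factorization.

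This last step is the only one needing care. It suffices to use the two coprime terms $\mathsf P_2=2$ and $\mathsf P_3=5$, arguing exactly as in Proposition~\ref{prop:zerofree}.
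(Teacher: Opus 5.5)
Your proposal is correct and follows the paper's proof. You get uniqueness from strict monotonicity of the tail sum, and the closure from Theorem~\ref{thm:lucas-core} with $N=1$: the core is the constant $1$ and the inner radius is below $1$. The boundary line is excluded because equality in the triangle inequality forces $2^{-it}=5^{-it}=-1$, contradicting the irrationality of $\log2/\log5$. The only cosmetic difference is that you obtain zero-freeness for $\Ree s>\sigma_{\rm Pell}$ directly from the triangle inequality rather than reading it off the criterion.
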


\begin{proof}
The tail sum in \eqref{eq:pell-edge} is strictly decreasing from $+\infty$ to $0$, so the solution is unique.  The finite-core criterion gives the zero-free half-plane and sharpness of its boundary in the sense of real-part closure.  If a zero actually lay on the boundary, equality in the triangle inequality would force every tail phase to be $-1$.  In particular
\[
2^{-it}=5^{-it}=-1,
\]
which would make $\log2/\log5$ rational, an impossibility.  Thus the boundary is not attained.  The same Rouch\'e argument as in Theorem~\ref{thm:syndetic} also gives relatively dense ordinates of zeros near every line $0<\sigma\le\sigma_{\rm Pell}$.
\end{proof}

Theorem~\ref{thm:lucas-core} complements the zero-free estimates of Pan and Liu \cite{PanLiu2026}: in the arithmetic Lucas setting it identifies the exact obstruction to sharpness.  Once the finitely many defective terms are isolated, the remaining problem is no longer infinite-dimensional; it is a finite torus optimization coupled to the elementary polygon geometry of the tail.

\section{The Jessen function and mean zero density}\label{sec:jessen}

The preceding results locate zeros topologically and show that they recur with positive lower frequency in every admissible vertical neighborhood.  The classical theory of analytic almost periodic functions turns this into an exact mean-density statement.  For $\sigma>0$ define the Jessen function
\begin{equation}\label{eq:jessen}
\JF(\sigma)
:=\lim_{T\to\infty}\frac1{2T}\int_{-T}^{T}
\log|\ZF(\sigma+it)|\,dt.
\end{equation}
The limit exists and is convex by the theorem of Jessen and Tornehave; see \cite{Jessen1933,JessenTornehave1945} and, for a modern Dirichlet-series formulation, \cite{KouroupisPerfekt2023}.

The phase-realization lemma gives a particularly useful model for this mean.  Let
\[
\mathcal I=\{n\ge3:n\ne6,12\},
\]
and let $(U_n)_{n\in\mathcal I}$ be independent Haar-uniform variables on $\T$.  Put
\begin{align}\label{eq:random-model}
X_\sigma={}&2+2^{-\sigma}U_3+3^{-\sigma}U_4
+8^{-\sigma}U_3^3+144^{-\sigma}U_3^4U_4^2\\
&+\sum_{\substack{n\ge5\\n\ne6,12}}F_n^{-\sigma}U_n.\notag
\end{align}

\begin{proposition}[Haar representation]\label{prop:haar-jessen}
For every $\sigma>0$,
\begin{equation}\label{eq:haar-jessen}
\JF(\sigma)=\mathbb E\log|X_\sigma|.
\end{equation}
If
\[
B_\sigma:=X_\sigma-5^{-\sigma}U_5,
\]
then the independent $U_5$-coordinate can be integrated out exactly:
\begin{equation}\label{eq:conditional-jensen}
\JF(\sigma)
=\mathbb E\log\max\{|B_\sigma|,5^{-\sigma}\}.
\end{equation}
\end{proposition}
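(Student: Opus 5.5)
The plan has two parts. First, identify the Jessen mean with an expectation over a random multiplicative model. Second, reduce that model to the stated independent-coordinate model by the phase-realization argument, and then integrate out $U_5$ using Jensen's formula.

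\emph{Step 1: the Haar identity via Jessen's theorem.} Let $P$ be the set of all primes and $\T^P$ the infinite torus with Haar measure. For $\omega\in\T^P$ put $\chi_\omega(n)=\prod_p\omega_p^{v_p(n)}$ and
\[
\ZF(\sigma;\omega)=\sum_n\chi_\omega(F_n)F_n^{-\sigma}.
\]
The vertical flow $t\mapsto(p^{-it})_p$ is the Kronecker flow used in Lemma~\ref{lem:vertical}. The Jessen--Tornehave theorem, in the Dirichlet-series form of \cite{KouroupisPerfekt2023}, identifies the mean value of $\log|\ZF(\sigma+it)|$ with $\int_{\T^P}\log|\ZF(\sigma;\omega)|\,d\omega$. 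Its hypotheses need care: the theorem is usually stated for absolutely convergent Dirichlet series $\sum a_m m^{-s}$, and it holds here because $\sum_n F_n^{-\sigma}<\infty$. The key point is that $\log|\cdot|$ is not continuous, so unique ergodicity alone does not apply. I would cite the theorem rather than reprove it. If a self-contained route is wanted, I would follow the classical path: truncate to $F_1,\dots,F_M$, apply equidistribution on the finite torus $\T^{P_M}$ to the subharmonic function $\log|\cdot|$, and control the truncation error uniformly. That uniform control is the genuinely delicate point, and it is exactly what Jessen's convexity/uniformity theory supplies.

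\emph{Step 2: pushing Haar measure forward to the model $X_\sigma$.} Consider the map $\omega\mapsto(\chi_\omega(F_n))_{n\ge3}$. I claim its image law makes $(\chi_\omega(F_n))_{n\in\mathcal I}$ independent and Haar-uniform, with $\chi_\omega(F_6)=\chi_\omega(F_3)^3$ and $\chi_\omega(F_{12})=\chi_\omega(F_3)^4\chi_\omega(F_4)^2$. The argument follows the induction of Lemma~\ref{lem:phase}, using the primitive primes $q_n$ supplied by Carmichael's theorem.
\begin{enumerate}[label=(\alph*)]
\item Fix everything determined by primes that appear before index $n$, together with the non-primitive primes of $F_n$.
\item Then $\chi_\omega(F_n)=c\,\omega_{q_n}^{e_n}$ for some constant $c$, where $e_n=v_{q_n}(F_n)$ and $\omega_{q_n}$ is Haar-distributed independently of everything fixed in (a).
\item The map $z\mapsto z^{e_n}$ preserves Haar measure on $\T$, so $\chi_\omega(F_n)$ is uniform conditionally on the past.
\end{enumerate}
This gives independence for all finitely many indices simultaneously, hence for the whole sequence. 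Substituting into the series shows that $\ZF(\sigma;\omega)$ has the same law as $X_\sigma$, which yields \eqref{eq:haar-jessen}. Since $\log|X_\sigma|$ may equal $-\infty$ on a null set, integrability is also needed. It follows because the $U_5$-average of $\log|X_\sigma|$ computed in Step 3 is bounded below by $-\sigma\log5$.

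\emph{Step 3: integrating out $U_5$.} The coordinate $U_5$ is independent of $B_\sigma$. Conditionally on $B_\sigma=b$, Jensen's formula gives
\[
\int_\T\log|b+5^{-\sigma}u|\,dm(u)=\log\max\{|b|,5^{-\sigma}\}.
\]
Fubini then gives \eqref{eq:conditional-jensen}. Fubini is justified because this conditional integral is bounded below by $\log 5^{-\sigma}$ and above by $\log\sum_n F_n^{-\sigma}$, and it also confirms the integrability claimed in Step 2.

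I expect the main obstacle to be Step 1, namely justifying the passage from the time average to the torus average for the singular function $\log|\cdot|$. For that step I would rely on the cited Jessen--Tornehave theorem.
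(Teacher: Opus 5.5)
Your proposal is correct and follows essentially the same route as the paper. You use the Jessen--Tornehave theorem to carry the time mean of $\log|\ZF(\sigma+it)|$ to a Haar mean, the primitive-divisor induction of Lemma~\ref{lem:phase} to push Haar measure forward to the independent-coordinate model $X_\sigma$, and Jensen's formula in the free $U_5$ coordinate to get both integrability and \eqref{eq:conditional-jensen}. Your explicit conditioning argument for the pushforward is a more hands-on version of the paper's appeal to the character hull. One small fix: condition on every prime coordinate other than $\omega_{q_n}$, including any further primitive primes of $F_n$; then $\chi_\omega(F_n)=c\,\omega_{q_n}^{e_n}$ holds exactly as you claim.
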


\begin{proof}
Lemma~\ref{lem:phase} identifies the character hull of the Fibonacci frequencies with the compact group of phase sequences satisfying only
$u_6=u_3^3$ and $u_{12}=u_3^4u_4^2$.  Under the free coordinates $\mathcal I$, Haar measure is product Haar measure.  The ordinary Dirichlet series $\ZF$ has an analytic spatial extension with respect to the prime-log integral base in the sense of Jessen and Tornehave; their Theorems~29--30 \cite{JessenTornehave1945} therefore identify its Jessen mean with the spatial Haar mean of the logarithmic modulus.

For completeness, the only delicate point in this identification is the logarithmic singularity at zero.  Let
\[
g_\delta(z)=\log\max\{|z|,\delta\},\qquad \delta>0.
\]
Since $X_\sigma$ is a continuous function on the compact character hull, $g_\delta(X_\sigma)$ is continuous and the dense Kronecker translation on the hull is uniquely ergodic.  Hence
\[
\lim_{T\to\infty}\frac1{2T}\int_{-T}^{T}
 g_\delta(\ZF(\sigma+it))\,dt
=\mathbb E\,g_\delta(X_\sigma).
\]
The Jessen--Tornehave spatial-extension theorem is precisely the passage from these continuous regularizations to $\log|\ZF|$.  On the spatial side the required integrability can be seen directly from the free $U_5$ coordinate.  Writing $r=5^{-\sigma}$ and conditioning on all other variables, Jensen's circle formula gives
\[
\frac1{2\pi}\int_0^{2\pi}\log|B_\sigma+re^{i\theta}|\,d\theta
=\log\max\{|B_\sigma|,r\}.
\]
The right-hand side is finite, bounded below by $\log r$, and bounded above uniformly on the compact hull.  Thus the logarithmic singularity is integrable; equivalently, the regularized spatial means decrease to the unregularized one.  This proves \eqref{eq:haar-jessen}, and the same conditional Jensen formula gives \eqref{eq:conditional-jensen}.
\end{proof}

The next regularity fact is useful because it rules out singular concentration of the mean zero measure on isolated vertical lines.

\begin{proposition}[Smooth Jessen function]\label{prop:jessen-smooth}
The function $\JF$ belongs to $C^\infty(0,\infty)$.
\end{proposition}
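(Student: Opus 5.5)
We use the Haar representation of Proposition~\ref{prop:haar-jessen}, but we do not integrate out only the single coordinate $U_5$. We integrate out several independent free coordinates at once and exploit the smoothing produced by their convolution. Fix a compact interval $[\sigma_0,\sigma_1]\subset(0,\infty)$. Choose a finite set $S\subset\mathcal I$ of free indices, with $|S|=k$ large; $S=\{5,7,8,\dots\}$ will do. Write
\[
X_\sigma=Y_\sigma+W_\sigma,\qquad W_\sigma=\sum_{n\in S}F_n^{-\sigma}U_n ,
\]
so that $Y_\sigma$ is independent of $(U_n)_{n\in S}$. Conditioning on $Y_\sigma=y$ gives
\[
\JF(\sigma)=\mathbb E\,\Phi_\sigma(Y_\sigma),\qquad \Phi_\sigma(y)=\int_{\C}\log|y+w|\,d\mu_\sigma(w),
\]
where $\mu_\sigma$ is the law of $W_\sigma$, the convolution of $k$ uniform measures on circles of radii $r_n(\sigma)=F_n^{-\sigma}$.

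The key analytic step is to show that $\mu_\sigma$ has a density $f_\sigma(w)$ that is $C^m$ jointly in $(\sigma,w)$ once $k\ge k(m)$. We argue on the Fourier side. The characteristic function is
\[
\widehat{\mu_\sigma}(\xi)=\prod_{n\in S}J_0\bigl(r_n(\sigma)|\xi|\bigr),
\]
and each factor satisfies $|J_0(x)|\ll (1+x)^{-1/2}$. Since the radii are bounded below on $[\sigma_0,\sigma_1]$, we obtain
\[
|\widehat{\mu_\sigma}(\xi)|\ll (1+|\xi|)^{-k/2}
\]
uniformly in $\sigma$. Differentiating in $\sigma$ brings down factors $r_n'(\sigma)\,|\xi|\,J_0'(r_n|\xi|)$, and $J_0'=-J_1$ has the same decay. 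Each $\sigma$-derivative therefore costs at most one power of $|\xi|$ and one of the $k$ factors. Taking $k$ large in terms of $m$ makes $|\xi|^{a}\,\partial_\sigma^b\widehat{\mu_\sigma}$ integrable for all $a+b\le m$, which yields $f_\sigma\in C^m$ jointly with compact support in $|w|\le\sum_{n\in S}r_n$.

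With this in hand, $\Phi_\sigma=\log|\cdot|*\check f_\sigma$, where $\check f_\sigma(w)=f_\sigma(-w)$. Since $\log|\cdot|$ is locally integrable and $f_\sigma$ is $C^m$ in $(\sigma,w)$ with compact support, we differentiate under the integral. This shows that $\Phi_\sigma(y)$ is $C^m$ in $(\sigma,y)$, with derivatives bounded uniformly for $|y|$ in the bounded range of $Y_\sigma$; that range is bounded by $2+\ZF(\sigma_0)$. The map $\sigma\mapsto Y_\sigma$ is also $C^\infty$ pathwise. Indeed, its $\sigma$-derivatives $\sum(-\log F_n)^jF_n^{-\sigma}(\cdots)$ converge uniformly on $[\sigma_0,\sigma_1]$ because $F_n\asymp\varphi^n$. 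By the chain rule, $\partial_\sigma^j[\Phi_\sigma(Y_\sigma)]$ is then bounded uniformly in $\sigma$ and in the sample. Dominated convergence lets us differentiate $\JF(\sigma)=\mathbb E\,\Phi_\sigma(Y_\sigma)$ $m$ times. Since $m$ is arbitrary, as is the compact interval, we conclude $\JF\in C^\infty(0,\infty)$.

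The main obstacle is the uniform joint regularity of $f_\sigma$ in $\sigma$ and $w$. Two points need care. First, the bound $|J_0(x)|\ll x^{-1/2}$ must hold with constants uniform in the radii; this is fine because the radii stay in a compact subset of $(0,\infty)$. Second, the $\sigma$-derivatives of the product must be controlled. A cleaner route, if the Fourier bookkeeping becomes messy, is the change of variables $w\mapsto w/r_5(\sigma)$. This rescales the dependence on $\sigma$, leaving the ratios $r_n/r_5$ to depend smoothly on $\sigma$. It also moves $\sigma$-derivatives onto the explicit scaling factor and the smooth ratio functions.
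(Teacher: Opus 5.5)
Your proposal is correct and follows essentially the paper's argument: isolate many free coordinates with indices $n\ge5$, $n\ne6,12$, use the decay of the Bessel product $\prod J_0(F_n^{-\sigma}|\xi|)$ and of its $\sigma$-derivatives to get a smooth density, and differentiate $\log|z|$ (locally integrable) against it. The paper instead Fourier-inverts the full characteristic function of $X_\sigma$, using the bound $O((1+|\xi|)^b)$ for the $\sigma$-derivatives of the remainder's characteristic function, whereas you condition on the remainder and use the chain rule; one small fix is to require $S\subset\{n\ge5:n\ne6,12\}$ rather than $S\subset\mathcal I$, since $U_3,U_4$ also occur in the forced $F_6$ and $F_{12}$ terms and would break the independence of $Y_\sigma$, though your example $S=\{5,7,8,\dots\}$ already avoids them.
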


\begin{proof}
Fix a compact interval $I\Subset(0,\infty)$ and an integer $k\ge0$.  From the free coordinates in \eqref{eq:random-model}, choose $m$ distinct indices $n_j\ge5$ with $n_j\notin\{6,12\}$, with $m$ as large as needed, and split
$X_\sigma=Y_\sigma+W_\sigma$, where
\[
Y_\sigma=\sum_{j=1}^{m}F_{n_j}^{-\sigma}U_{n_j}
\]
is independent of $W_\sigma$.  The restriction $n_j\ge5$ is important: each selected coordinate then occurs only in its linear summand in \eqref{eq:random-model}, whereas $U_3$ and $U_4$ also occur in the forced $F_6$ and $F_{12}$ terms.  The characteristic function of $Y_\sigma$ on $\mathbb R^2\simeq\mathbb C$ is
\[
\widehat\mu_{Y,\sigma}(\xi)
=\prod_{j=1}^{m}J_0(F_{n_j}^{-\sigma}|\xi|),
\]
where $J_0$ is the Bessel function.  The standard bounds for $J_0$ and its derivatives imply, uniformly for $\sigma\in I$,
\[
\partial_\sigma^a\widehat\mu_{Y,\sigma}(\xi)
=O_{I,a,m}\bigl((1+|\xi|)^{a-m/2}\bigr).
\]
The series defining $W_\sigma$ and all its $\sigma$-derivatives converge absolutely and uniformly on $I$, so
\[
\partial_\sigma^b\widehat\mu_{W,\sigma}(\xi)
=O_{I,b}\bigl((1+|\xi|)^b\bigr).
\]
Taking $m>2(k+2)$ shows that the first $k$ $\sigma$-derivatives of the characteristic function of $X_\sigma$ are integrable on $\mathbb R^2$.  Fourier inversion therefore gives a density $p_\sigma(z)$ whose first $k$ $\sigma$-derivatives are bounded and continuous, uniformly on $I$.  Since $X_\sigma$ has uniformly bounded support on $I$ and $\log|z|$ is locally integrable on $\mathbb C$,
\[
\JF(\sigma)=\int_{\mathbb C}\log|z|\,p_\sigma(z)\,dz
\]
may be differentiated $k$ times under the integral.  As $k$ is arbitrary, the claim follows.
\end{proof}

For $T>0$ let
\[
\nu_T:=\frac1{2T}\sum_{\substack{\ZF(\rho)=0\\ |\Im\rho|\le T}}
\delta_{\Ree\rho},
\]
with zeros counted with multiplicity.  Jessen--Tornehave theory identifies the vague limit of these measures with the Riesz measure of $\JF$.

\begin{theorem}[Mean vertical zero density]\label{thm:jessen-density}
Locally on $(0,\infty)$,
\begin{equation}\label{eq:vague-density}
\nu_T\ \Longrightarrow\ d_F(\sigma)\,d\sigma,
\qquad
 d_F(\sigma):=\frac1{2\pi}\JF''(\sigma).
\end{equation}
The density $d_F$ is $C^\infty$ and nonnegative, $\JF$ is strictly convex on $(0,\sigma_F)$, and
\begin{equation}\label{eq:jessen-flat}
\JF(\sigma)=\log2\quad(\sigma\ge\sigma_F),
\qquad
\JF(\sigma)>\log2\quad(0<\sigma<\sigma_F).
\end{equation}
Moreover the support of $d_F(\sigma)\,d\sigma$ is exactly $(0,\sigma_F]$ as a subset of $(0,\infty)$.  Equivalently, every nonempty open interval contained in $(0,\sigma_F]$ carries positive mean zero density.

If
\[
D_F(\sigma):=
\lim_{T\to\infty}\frac1{2T}
\#\{\rho:\ZF(\rho)=0,\ |\Im\rho|\le T,\ \Ree\rho>\sigma\},
\]
then the limit exists for every $\sigma>0$ and
\begin{equation}\label{eq:cumulative-density}
D_F(\sigma)=-\frac1{2\pi}\JF'(\sigma).
\end{equation}
In particular $D_F$ is $C^\infty$ and strictly decreasing on $(0,\sigma_F)$, with $D_F(\sigma)>0$ there, while $D_F(\sigma)=0$ for $\sigma\ge\sigma_F$.
Finally the cutoff is flat to every algebraic order: for each $M\ge1$,
\begin{equation}\label{eq:flat-edge-density}
\JF(\sigma)-\log2
=O_M((\sigma_F-\sigma)^M),
\qquad
D_F(\sigma)=O_M((\sigma_F-\sigma)^M)
\end{equation}
as $\sigma\uparrow\sigma_F$.
\end{theorem}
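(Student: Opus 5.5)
I will derive everything from the Haar representation (Proposition~\ref{prop:haar-jessen}) combined with Jensen's formula on the character hull. The vague convergence \eqref{eq:vague-density} and the cumulative formula \eqref{eq:cumulative-density} are the Jessen--Tornehave main theorem for almost periodic functions in a strip. For an analytic almost periodic function, the zero-counting function $\lim_T \frac1{2T}\#\{\rho:|\Im\rho|\le T,\ \Ree\rho>\sigma\}$ exists at every $\sigma$ where the Jessen function is differentiable, and it equals $-\frac1{2\pi}\JF'(\sigma)$ after normalising by the value at $+\infty$. That normalisation is $\JF'(+\infty)=0$, because $\ZF\to2$ uniformly. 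Proposition~\ref{prop:jessen-smooth} makes $\JF$ differentiable everywhere, so the limit exists for every $\sigma>0$. Taking the distributional derivative then gives the vague convergence with density $\frac1{2\pi}\JF''$. Smoothness of $d_F$ and $D_F$ follows from Proposition~\ref{prop:jessen-smooth}, and nonnegativity follows from convexity.

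\textbf{Flatness for $\sigma\ge\sigma_F$.} By Lemma~\ref{lem:torus} and Lemma~\ref{lem:H}, for $\sigma\ge\sigma_F$ we have $|X_\sigma|\ge L(\sigma)-R(\sigma)=-H(\sigma)\ge0$. Moreover $X_\sigma\ne0$ off a null set of the hull; the equality case requires $U_3=U_4=-1$. I will view $X_\sigma$ as an analytic function of $s$, namely $X_s=\sum\chi(F_n)F_n^{-s}$ for each point $\chi$ of the hull. This function is zero-free in $\Ree s>\sigma$ and tends to $2$, so $\log|X_s|$ is harmonic there. Its vertical means over the Kronecker flow are invariant, so $\mathbb E\log|X_\sigma|$ equals the limit as $\sigma\to\infty$, which is $\log2$. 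Concretely, I will integrate over the flow direction inside the hull, or equivalently use that $\JF$ is affine where there are no zeros and bounded as $\sigma\to\infty$.

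\textbf{Zeros for $\sigma<\sigma_F$.} For $\sigma<\sigma_F$, Theorem~\ref{thm:syndetic} gives positive lower density of zeros in every window $|\Ree\rho-\sigma|<\varepsilon$. Hence $D_F$ strictly decreases across every interval in $(0,\sigma_F]$, so $\JF'$ strictly increases and $\JF$ is strictly convex there. Since $\JF'(\sigma_F)=0$, we get $\JF'<0$ on $(0,\sigma_F)$, and this gives $\JF>\log2$ there. The support statement follows: the density is positive on every interval, and since the support is closed it contains $\sigma_F$. For the linear lower bound I need care in passing to the limit, because Theorem~\ref{thm:syndetic} counts zeros with $|\Ree\rho-\sigma|<\varepsilon$ at relatively dense ordinates. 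That gives $D_F(\sigma-\varepsilon)-D_F(\sigma+\varepsilon)\ge c>0$ after applying vague convergence with continuous cutoffs.

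\textbf{Main obstacle.} The hard part is the infinite-order flatness \eqref{eq:flat-edge-density}. Since $\JF$ is $C^\infty$ and identically $\log2$ on $[\sigma_F,\infty)$, all derivatives of $\JF-\log2$ vanish at $\sigma_F$ from the right. Continuity of each derivative then gives vanishing from the left, and Taylor's theorem yields $O_M((\sigma_F-\sigma)^M)$ for both $\JF-\log2$ and $D_F=-\JF'/2\pi$. So the whole weight rests on Proposition~\ref{prop:jessen-smooth} holding uniformly across $\sigma_F$. The step I expect to be most delicate is justifying $\JF\equiv\log2$ on $[\sigma_F,\infty)$ including the endpoint, where $|X_{\sigma_F}|$ can vanish on the null set $U_3=U_4=-1$ with all tail phases aligned. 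There I will rely on continuity of $\JF$, either from convexity or from smoothness, rather than on a direct argument.
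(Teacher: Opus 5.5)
Your proposal is correct and follows essentially the paper's argument: the Jessen--Tornehave theorem together with the smoothness of $J_F$ gives the density and cumulative formulas, the zero-free half-plane and $Z_F\to2$ give $J_F\equiv\log2$ on $[\sigma_F,\infty)$, the syndetic-ordinate theorem gives positivity of the zero density and strict convexity, and Taylor's theorem gives the infinite-order flatness. The differences are only of ordering. You obtain the vague limit from the existence of the cumulative counts $D_F$ rather than integrating the vague limit, and you get positive mass from differences $D_F(\sigma-\varepsilon)-D_F(\sigma+\varepsilon)$ rather than from the Portmanteau inequality on compact sets.
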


\begin{proof}
The convergence \eqref{eq:vague-density} is the classical Jessen--Tornehave zero-density theorem; in the notation of \cite{JessenTornehave1945}, this is the zero-frequency conclusion of Theorem~29 for the analytic spatial extension furnished by Theorem~30.  Proposition~\ref{prop:jessen-smooth} makes its Riesz measure absolutely continuous with the displayed smooth density.

By Proposition~\ref{prop:zerofree}, there are no zeros in $\Ree s\ge\sigma_F$.  Hence $\JF''=0$ on $(\sigma_F,\infty)$, so $\JF$ is affine there.  Since $\ZF(\sigma+it)\to2$ uniformly in $t$ as $\sigma\to\infty$, one has $\JF(\sigma)\to\log2$; therefore $\JF\equiv\log2$ on $[\sigma_F,\infty)$.

Let $\mu=d_F(\sigma)\,d\sigma$ denote the vague limit in \eqref{eq:vague-density}.  To justify the support statement without using Portmanteau in the wrong direction, let $I\subset(0,\infty)$ be any nonempty open interval meeting $(0,\sigma_F]$.  Because $I$ is open, we may choose $\sigma_0\in I\cap(0,\sigma_F)$ and a compact interval
\[
K=[\sigma_0-r,\sigma_0+r]\Subset I
\]
with $r>0$ small.  Apply Theorem~\ref{thm:syndetic} with horizontal tolerance smaller than $r$.  Its linear counting conclusion gives a constant $c_K>0$ such that
\[
\nu_T(K)\ge c_K
\]
for all sufficiently large $T$.  Since $K$ is compact, the Portmanteau theorem for vague convergence gives
\[
\limsup_{T\to\infty}\nu_T(K)\le\mu(K),
\]
and hence $\mu(K)\ge c_K>0$.  Therefore $\mu(I)>0$.  This proves that the support is exactly $(0,\sigma_F]$ relative to $(0,\infty)$.

The Riesz measure consequently gives positive mass to every nonempty subinterval of $(0,\sigma_F)$, so $\JF$ is strictly convex there.  Convexity and $\JF'=0$ on $(\sigma_F,\infty)$ then give $\JF'(\sigma)<0$ for $\sigma<\sigma_F$, proving the strict inequality in \eqref{eq:jessen-flat}.  The same positive-mass statement shows that $D_F=-\JF'/(2\pi)$ is strictly decreasing on $(0,\sigma_F)$.

Integrating \eqref{eq:vague-density} over $(\sigma,\infty)$ and using smoothness (hence absence of atoms on the boundary) gives
\[
D_F(\sigma)=\int_\sigma^\infty d_F(u)\,du
=-\frac1{2\pi}\JF'(\sigma),
\]
which is \eqref{eq:cumulative-density}.  Finally, $\JF-\log2$ is $C^\infty$ and vanishes identically to the right of $\sigma_F$; all of its derivatives therefore vanish at $\sigma_F$.  Taylor's theorem gives \eqref{eq:flat-edge-density} for every $M$.
\end{proof}

\begin{remark}\label{rem:conditional-derivative}
Formula \eqref{eq:conditional-jensen} also gives a stable numerical representation of the cumulative density.  Since $B_\sigma$ has a smooth planar density, the event $|B_\sigma|=5^{-\sigma}$ has probability zero, and differentiation yields
\[
D_F(\sigma)=\frac1{2\pi}\mathbb E\!\left[
(\log5)\mathbf 1_{\{|B_\sigma|<5^{-\sigma}\}}
-\mathbf 1_{\{|B_\sigma|>5^{-\sigma}\}}
\Ree\frac{B_\sigma'}{B_\sigma}
\right].
\]
This avoids direct zero searches at very large height and is the basis of the numerical values reported below.

\end{remark}

\section{Approach to the right edge: phase locking and height}\label{sec:height}

The existence and density results above do not by themselves say how high one must go before seeing a zero within a prescribed distance of the right edge.  This section gives a quantitative necessary condition for such zeros and separates three notions: the first observed height $T_F(\varepsilon)$, the mean spacing supplied by the Jessen density, and a shrinking-target heuristic for the former.

For $\sigma\ge1/2$ retain the notation $A,B,C,D,L,R,H$ from Section~2 and put
\[
K(\sigma):=72^{-\sigma}+18^{-\sigma}+4\,48^{-\sigma}<1.
\]

\begin{proposition}[Quantitative phase locking]\label{prop:phase-lock}
Let $\rho=\sigma+it$ be a zero of $\ZF$ with $1/2\le\sigma<\sigma_F$.  Then
\begin{equation}\label{eq:lock23}
A|2^{-it}+1|^2+B|3^{-it}+1|^2
\le \frac{2H(\sigma)}{1-K(\sigma)}.
\end{equation}
Write
\[
E(\rho)=\sum_{\substack{n\ge5\\n\ne6,12}}F_n^{-\sigma}F_n^{-it}
\]
and $\omega=E(\rho)/|E(\rho)|$.  Then
\begin{equation}\label{eq:lock-free-energy}
\sum_{\substack{n\ge5\\n\ne6,12}}
F_n^{-\sigma}|F_n^{-it}-\omega|^2\le2H(\sigma),
\end{equation}
and
\begin{equation}\label{eq:lock-direction}
|\omega+1|^2\le\frac{2H(\sigma)}{L(\sigma)+H(\sigma)}.
\end{equation}
Consequently, for every $n\ge5$, $n\ne6,12$,
\begin{equation}\label{eq:lock-individual}
|F_n^{-it}+1|
\le \sqrt{2H(\sigma)F_n^{\sigma}}
+\sqrt{\frac{2H(\sigma)}{L(\sigma)+H(\sigma)}}.
\end{equation}
\end{proposition}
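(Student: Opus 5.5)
The plan is to rerun the proof of Lemma~\ref{lem:torus} without discarding the slack, and then to read off all four inequalities from the identity $P_\sigma(z,w)=-E(\rho)$ at a zero. Let $\rho=\sigma+it$ be a zero with $1/2\le\sigma<\sigma_F$. Put $z=2^{-it}$ and $w=3^{-it}$, and write $P=P_\sigma(z,w)$ as in Proposition~\ref{prop:zerofree}. Then $\ZF(\rho)=P+E(\rho)=0$, so $|P|=|E(\rho)|\le R(\sigma)=L(\sigma)+H(\sigma)$. Lemma~\ref{lem:torus} applies because $\sigma\ge1/2$, and it gives $\Ree P\ge L(\sigma)>0$. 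In particular $E(\rho)\ne0$, so $\omega$ is well defined, and $\omega=-P/|P|$.

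For \eqref{eq:lock23}, write $z=-e^{iu}$ and $w=-e^{iv}$ as in the proof of Lemma~\ref{lem:torus}. That proof shows
\[
\Ree P-L=A(1-\cos u)+B(1-\cos v)+C(1-\cos3u)-D(1-\cos(4u+2v)),
\]
and that the weighted Cauchy--Schwarz step bounds the $D$-term by $K(\sigma)$ times the sum of the three positive terms. Keeping the factor $1-K(\sigma)$ instead of discarding it, and dropping the nonnegative $C$-term, gives
\[
\Ree P-L\ge(1-K)\bigl(A(1-\cos u)+B(1-\cos v)\bigr)=\tfrac{1-K}{2}\bigl(A|z+1|^2+B|w+1|^2\bigr),
\]
since $|z+1|^2=|e^{iu}-1|^2=2(1-\cos u)$. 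On the other side, $\Ree P-L\le|P|-L\le R-L=H$. Combining the two bounds yields \eqref{eq:lock23}.

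For \eqref{eq:lock-free-energy}, put $r_n=F_n^{-\sigma}$ and expand the squares:
\[
\sum r_n|F_n^{-it}-\omega|^2=2\sum r_n-2\Ree\Bigl(\bar\omega\sum r_nF_n^{-it}\Bigr)=2R-2|E(\rho)|.
\]
Since $|E(\rho)|=|P|\ge\Ree P\ge L$, this is at most $2(R-L)=2H$.

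For \eqref{eq:lock-direction}, use $\omega=-P/|P|$ to write $|\omega+1|^2=2\bigl(1-\Ree P/|P|\bigr)$. Then $\Ree P/|P|\ge L/|P|\ge L/(L+H)$, which gives exactly $2H/(L+H)$.

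Finally, \eqref{eq:lock-individual} follows from the triangle inequality $|F_n^{-it}+1|\le|F_n^{-it}-\omega|+|\omega+1|$. The first term is bounded by the single summand of \eqref{eq:lock-free-energy}, namely $F_n^{-\sigma}|F_n^{-it}-\omega|^2\le2H$, and the second by \eqref{eq:lock-direction}.

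The only step needing care is the first one. One has to check that the Cauchy--Schwarz estimate in Lemma~\ref{lem:torus} really delivers the factor $(1-K)$ in front of the $A$- and $B$-energies, with the normalizations $|e^{ix}-1|^2=2(1-\cos x)$ tracked so that the constant is $2/(1-K)$. Everything else is the elementary identity $\sum r_n|x_n-\omega|^2=2R-2|E|$ together with $L\le\Ree P\le|P|\le L+H$.
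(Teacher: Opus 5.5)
Your proposal is correct and follows the paper's proof essentially step for step. Both use the same slack-keeping Cauchy--Schwarz bound $\Ree P-L\ge\frac{1-K}{2}\bigl(A|e^{iu}-1|^2+C|e^{3iu}-1|^2+B|e^{iv}-1|^2\bigr)$ with the $C$-term dropped, then the identity $\sum r_n|F_n^{-it}-\omega|^2=2(R-|E(\rho)|)$, the formula $|\omega+1|^2=2(1-\Ree P/|P|)$ with $\omega=-P/|P|$, and the final triangle inequality; your explicit check that $|E(\rho)|=|P|\ge L(\sigma)>0$, so that $\omega$ is well defined, is a small point the paper leaves implicit.
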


\begin{proof}
Use the notation in the proof of Lemma~\ref{lem:torus}, so that
$z=-e^{iu}$, $w=-e^{iv}$ and
\[
P=2+Az+Bw+Cz^3+Dz^4w^2.
\]
The weighted Cauchy--Schwarz estimate used there actually gives
\begin{align*}
\Ree P-L(\sigma)
&\ge \frac{1-K(\sigma)}2
\bigl(A|e^{iu}-1|^2+C|e^{3iu}-1|^2+B|e^{iv}-1|^2\bigr).
\end{align*}
At a zero, $E(\rho)=-P$ and
\[
\Ree P\le |P|=|E(\rho)|\le R(\sigma)=L(\sigma)+H(\sigma).
\]
Dropping the nonnegative $C$-term gives \eqref{eq:lock23}.

Next set $a_n=F_n^{-\sigma}$ and $u_n=F_n^{-it}$ on the free indices.  Since
$\sum a_n=R(\sigma)$ and $|E(\rho)|=|P|\ge L(\sigma)$,
\[
R(\sigma)-|E(\rho)|\le H(\sigma).
\]
Therefore
\[
\sum a_n|u_n-\omega|^2
=2\bigl(R(\sigma)-|E(\rho)|\bigr)\le2H(\sigma),
\]
which is \eqref{eq:lock-free-energy}.  Finally $\omega=-P/|P|$, and hence
\[
|\omega+1|^2
=2\left(1-\frac{\Ree P}{|P|}\right)
\le2\left(1-\frac{L(\sigma)}{L(\sigma)+H(\sigma)}\right),
\]
proving \eqref{eq:lock-direction}.  The triangle inequality together with the $n$th summand of \eqref{eq:lock-free-energy} gives \eqref{eq:lock-individual}.
\end{proof}

The proposition shows that the extremal phase configuration is not merely the configuration that proves the right edge: every sequence of actual zeros approaching that edge is forced toward it.

\begin{corollary}[Growing-dimensional phase lock]\label{cor:growing-lock}
Let $\rho_j=\sigma_j+it_j$ be zeros with $\sigma_j\uparrow\sigma_F$, and put $\varepsilon_j=\sigma_F-\sigma_j$.  Then
\[
2^{-it_j},\ 3^{-it_j},\ 8^{-it_j}\longrightarrow-1,
\qquad
144^{-it_j}\longrightarrow+1,
\]
and $F_n^{-it_j}\to-1$ for every fixed $n\ge3$, $n\ne12$.
More quantitatively, if
\[
0<c<\frac1{\sigma_F\log\varphi}=2.7962718895\ldots,
\]
then
\begin{equation}\label{eq:growing-lock}
\max_{\substack{3\le n\le c\log(1/\varepsilon_j)\\n\ne12}}
|F_n^{-it_j}+1|
=O_c\!\left(\varepsilon_j^{(1-c\sigma_F\log\varphi)/2}\right).
\end{equation}
\end{corollary}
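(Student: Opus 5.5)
The plan is to feed Proposition~\ref{prop:phase-lock} with the single fact that $H$ vanishes linearly at $\sigma_F$. By Lemma~\ref{lem:H}, $H$ is smooth with $H'<0$ and $H(\sigma_F)=0$. The mean value theorem on the compact interval $[1/2,\sigma_F]$ then gives $H(\sigma_j)\asymp\varepsilon_j$, with constants depending only on $\sup|H'|$ and $\inf|H'|$ there. On the same interval $L(\sigma)\ge 2-2^{-1/2}-3^{-1/2}-8^{-1/2}>0$ and $1-K(\sigma)\ge 1-K(1/2)>0$, so every denominator in Proposition~\ref{prop:phase-lock} is bounded away from zero. All bounds below are therefore uniform in $j$, and we may assume $\sigma_j\ge 1/2$ for large $j$.

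For the qualitative statement, apply \eqref{eq:lock23}. Since $A,B$ are bounded below on $[1/2,\sigma_F]$, we get $|2^{-it_j}+1|,|3^{-it_j}+1|=O(\varepsilon_j^{1/2})$. Then $8^{-it_j}=(2^{-it_j})^3\to(-1)^3=-1$ and $144^{-it_j}=(2^{-it_j})^4(3^{-it_j})^2\to1$, each with error $O(\varepsilon_j^{1/2})$ by the elementary bound $|a^k-b^k|\le k|a-b|$ on $\T$. For the free indices $n\ge5$, $n\ne6,12$, inequality \eqref{eq:lock-individual} gives
\[
|F_n^{-it_j}+1|\le \sqrt{2H(\sigma_j)}\,F_n^{\sigma_j/2}+O(\varepsilon_j^{1/2}),
\]
and this tends to $0$ for each fixed $n$. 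The indices $n=3,4,6$ correspond to the values $2,3,8$ already handled, so $F_n^{-it_j}\to-1$ for every fixed $n\ge3$, $n\ne12$.

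For the quantitative form \eqref{eq:growing-lock}, combine the same inequality with the bound $F_n\le\varphi^{n-1}$ and $\sigma_j\le\sigma_F$:
\[
|F_n^{-it_j}+1|\ll \varepsilon_j^{1/2}\varphi^{n\sigma_F/2}+\varepsilon_j^{1/2}.
\]
For $n\le c\log(1/\varepsilon_j)$ we have $\varphi^{n\sigma_F/2}\le\varepsilon_j^{-c\sigma_F\log\varphi/2}$, which gives the exponent $(1-c\sigma_F\log\varphi)/2$. This exponent is positive exactly when $c<1/(\sigma_F\log\varphi)$, which is the stated threshold. The $O(\varepsilon_j^{1/2})$ bounds for $n=3,4,6$ are dominated by this term.

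The step that needs the most care is uniformity. One must check that replacing $F_n^{\sigma_j}$ by $\varphi^{n\sigma_F}$ loses at most a constant factor, which holds since $F_n^{\sigma_j}\le F_n^{\sigma_F}$ for $F_n\ge1$. One must also check that the implied constant depends only on $c$ and not on $n$ or $j$. Apart from this the argument is a direct substitution into Proposition~\ref{prop:phase-lock}.
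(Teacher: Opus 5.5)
Your proposal is correct and follows the paper's proof. You show that $H$ vanishes linearly at $\sigma_F$ (you use the mean value theorem, the paper a first-order Taylor expansion), apply \eqref{eq:lock23} to the $2$- and $3$-phases, obtain the dependent phases from $F_6=2^3$ and $F_{12}=2^4 3^2$, and apply \eqref{eq:lock-individual} with $F_n\le\varphi^{n-1}$ to the free indices. Your explicit check that $L$ and $1-K$ stay bounded away from zero on $[1/2,\sigma_F]$ also supplies the uniformity that the paper leaves implicit.
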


\begin{proof}
Since $H(\sigma_F)=0$ and $H'(\sigma_F)<0$,
\[
H(\sigma_F-\varepsilon)=-H'(\sigma_F)\varepsilon+O(\varepsilon^2).
\]
Equations \eqref{eq:lock23}--\eqref{eq:lock-individual} therefore give the fixed-index limits; $F_6=2^3$ and $F_{12}=2^4 3^2$ give the two dependent phases.  For the uniform statement use $F_n\le\varphi^{n-1}$ in \eqref{eq:lock-individual}; the $2$- and $3$-terms obey the stronger estimate \eqref{eq:lock23}.  The exponent in \eqref{eq:growing-lock} is positive precisely for the displayed range of $c$.
\end{proof}

This phase locking can be converted into a rigorous, if not expected to be sharp, lower bound for the height needed to approach the edge.

\begin{proposition}[A Diophantine zero-free cusp]\label{prop:cusp}
For every $\eta>0$,
\begin{equation}\label{eq:T-lower}
T_F(\varepsilon)\gg_\eta
\varepsilon^{-1/(15.232+\eta)}
\qquad(\varepsilon\downarrow0).
\end{equation}
Equivalently, after possibly changing the implied constant, every zero $\rho=\beta+i\gamma$ sufficiently close to the right edge satisfies
\begin{equation}\label{eq:cusp}
\sigma_F-\beta\gg_\eta(1+|\gamma|)^{-15.232-\eta}.
\end{equation}
\end{proposition}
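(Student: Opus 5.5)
The plan is to turn the phase locking of Proposition~\ref{prop:phase-lock} into a simultaneous Diophantine condition on $t\log2$ and $t\log3$, and then invoke Rhin's irrationality measure for $\log3/\log2$. Let $\rho=\beta+i\gamma$ be a zero with $\tfrac12\le\beta<\sigma_F$, and put $\varepsilon=\sigma_F-\beta$. By Lemma~\ref{lem:H}, $H'(\sigma_F)<0$, so $H(\beta)\ll\varepsilon$. On $[1/2,\sigma_F]$ the quantities $A,B$ are bounded below and $1-K(\sigma)$ is bounded away from $0$. Hence \eqref{eq:lock23} gives
\[
|2^{-i\gamma}+1|\ll\sqrt\varepsilon,\qquad |3^{-i\gamma}+1|\ll\sqrt\varepsilon .
\]

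Next I convert these bounds into lattice statements. There are integers $k,\ell$ with
\[
\gamma\log2=(2k+1)\pi+\delta_1,\qquad \gamma\log3=(2\ell+1)\pi+\delta_2,\qquad |\delta_1|,|\delta_2|\ll\sqrt\varepsilon .
\]
Eliminating $\gamma$ by multiplying the first relation by $\log3/\log2$ and subtracting the second gives
\[
\Bigl|(2k+1)\tfrac{\log3}{\log2}-(2\ell+1)\Bigr|\ll\sqrt\varepsilon .
\]
Here $q:=|2k+1|\asymp 1+|\gamma|$, because $|\gamma|\log2=\pi|2k+1|+O(1)$. Rhin \cite{Rhin1987} gives the irrationality measure $\mu\le 8.616$ for $\log3/\log2$. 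Consequently, for every $\eta>0$,
\[
|q\log3/\log2-p|\gg_\eta q^{-(\mu-1)-\eta/2}
\]
for all integers $p$ and all $q\ge1$. Combining the two bounds yields
\[
\sqrt\varepsilon\gg_\eta(1+|\gamma|)^{-7.616-\eta/2},
\]
and squaring gives $\varepsilon\gg_\eta(1+|\gamma|)^{-15.232-\eta}$, which is \eqref{eq:cusp}. Inverting this relation gives \eqref{eq:T-lower}: a zero with $\sigma_F-\varepsilon<\beta<\sigma_F$ must satisfy $1+|\gamma|\gg_\eta\varepsilon^{-1/(15.232+\eta)}$, so $T_F(\varepsilon)$ obeys the same bound for small $\varepsilon$.

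The point needing care is the uniformity in small $q$, including $q=1$. The irrationality-measure inequality holds for all $q\ge1$ with a suitable constant, because $\log3/\log2$ is irrational, so no height threshold is needed. Equally, the factor $(1+|\gamma|)$ absorbs bounded ordinates. The main obstacle, though it is mostly bookkeeping, is checking that the constants in \eqref{eq:lock23} are uniform on $[1/2,\sigma_F]$ and that $H(\beta)\asymp\varepsilon$ holds there. Both follow from continuity and from $H'<0$ in Lemma~\ref{lem:H}. The exponent $15.232=2(\mu-1)$ comes entirely from the square-root loss in phase locking together with Rhin's bound.
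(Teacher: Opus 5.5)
Your proposal is correct and follows the paper's proof essentially step for step. In both, \eqref{eq:lock23} puts $\gamma\log2$ and $\gamma\log3$ within $O(\sqrt\varepsilon)$ of odd multiples of $\pi$; eliminating $\gamma$ gives a linear form in $\log2,\log3$ with coefficients $\asymp 1+|\gamma|$, and Rhin's exponent $8.616$ (one power is lost passing to the linear form) followed by squaring yields $15.232$. Your remarks on uniformity of the constants on $[1/2,\sigma_F]$ and on small $q$ only make explicit what the paper leaves implicit.
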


\begin{proof}
Put $\varepsilon=\sigma_F-\sigma$.  By \eqref{eq:lock23} and the preceding Taylor expansion,
\[
|2^{-it}+1|+|3^{-it}+1|\ll\sqrt\varepsilon.
\]
For $\varepsilon$ small there are therefore odd integers $a,b$ such that
\[
|t\log2-a\pi|+|t\log3-b\pi|\ll\sqrt\varepsilon.
\]
Consequently
\begin{equation}\label{eq:linear-form-small}
|a\log3-b\log2|\ll\sqrt\varepsilon,
\end{equation}
while $\max(|a|,|b|)\asymp1+|t|$.

Rhin's estimate may be stated as the effective irrationality-measure bound
\[
\left|\frac{\log3}{\log2}-\frac{p}{q}\right|
\gg_\delta q^{-8.616-\delta}
\qquad(q\to\infty)
\]
for every $\delta>0$; see \cite[Eq.~(8)]{Rhin1987}.  Taking $q=|a|$ and the corresponding signed numerator $p$ from $b/a$, and multiplying by $|a|\log2$, gives
\[
|a\log3-b\log2|\gg_\delta
\max(|a|,|b|)^{-7.616-\delta},
\]
because $|a|\asymp|b|$ in the present situation.  Thus the loss from $8.616$ to $7.616$ is exactly the factor $|a|$ introduced when passing from rational approximation to the linear form.  Combining this with \eqref{eq:linear-form-small} gives
\[
1+|t|\gg_\delta\varepsilon^{-1/(15.232+2\delta)}.
\]
Renaming $2\delta$ as $\eta$ proves both assertions.
\end{proof}

\begin{remark}[Mean spacing versus first hitting time]\label{rem:mean-v-first}
The Jessen result yields a much stronger statement on the \emph{mean} scale, but it should not be confused with a bound for the first zero.  Define
\[
M_F(\varepsilon):=\frac1{D_F(\sigma_F-\varepsilon)}.
\]
The infinite-order flatness in \eqref{eq:flat-edge-density} gives, for every $M\ge1$,
\begin{equation}\label{eq:mean-superpoly}
M_F(\varepsilon)\gg_M\varepsilon^{-M}.
\end{equation}
Thus the reciprocal mean density of zeros in the $\varepsilon$-edge strip grows faster than every power of $1/\varepsilon$.  This does not by itself imply the same lower bound for $T_F(\varepsilon)$, because a very early exceptional hit is compatible with a much larger asymptotic mean spacing.
\end{remark}

There is nevertheless a natural shrinking-target model for the first-hitting problem.  Put
\[
\lambda_F:=\sigma_F\log\varphi
=0.357619015418870\ldots.
\]
At the edge the free weights satisfy
$F_n^{-\sigma_F}\asymp e^{-\lambda_F n}$.  Proposition~\ref{prop:phase-lock} says, at the level of necessary conditions, that a near-edge zero with gap $\varepsilon$ must lie in a weighted phase ball of energy $O(\varepsilon)$.  The relevant coordinates are those with $F_n^{-\sigma_F}\gtrsim\varepsilon$, hence their number is
\[
m\sim\frac{\log(1/\varepsilon)}{\lambda_F}.
\]
The angular width in the $n$th such free coordinate is of order
$\sqrt{\varepsilon/F_n^{-\sigma_F}}$.  Multiplying these widths, and including the $m$-dimensional ball factor, gives the logarithmic Haar-volume estimate
\begin{equation}\label{eq:target-volume}
\log V(\varepsilon)
=-\frac{(\log(1/\varepsilon))^2}{4\lambda_F}
+O\bigl(\log(1/\varepsilon)\log\log(1/\varepsilon)\bigr).
\end{equation}
This leads to the following deliberately heuristic prediction.

\begin{remark}[Shrinking-target prediction]\label{rem:height-heuristic}
If the growing-dimensional Kronecker flow generated by the prime logarithms hits the near-extremal phase target on the inverse-Haar-volume scale suggested by \eqref{eq:target-volume}, then
\begin{equation}\label{eq:T-heuristic}
\log T_F(\varepsilon)
\sim\frac{(\log(1/\varepsilon))^2}{4\sigma_F\log\varphi}
=0.6990679724\ldots\,(\log(1/\varepsilon))^2.
\end{equation}
Equivalently $T_F(\varepsilon)$ would be superpolynomial but subexponential in $1/\varepsilon$.  We do not prove \eqref{eq:T-heuristic}: first-hitting times of a deterministic Kronecker flow can fluctuate with its Diophantine recurrence.  Quantitative local Kronecker estimates for logarithms of primes, such as \cite{KorolevRezvyakova2022}, provide a possible route to rigorous upper bounds, but the dimension here grows like $\log(1/\varepsilon)$.
\end{remark}

\paragraph{Exploratory near-edge zeros.}
A phase-guided Newton search, followed by $70$-digit evaluation of the full convergent series, produced the following representative zeros.  The search was targeted rather than exhaustive, so the table is evidence about the scale of the problem, not a certification of the exact function $T_F(\varepsilon)$.
\[
\begin{array}{r@{.}l r@{.}l c}
\multicolumn{2}{c}{\Ree\rho}&\multicolumn{2}{c}{|\Im\rho|}&\sigma_F-\Ree\rho\\
\hline
0&678781122233&3222&603994&0.06438228\\
0&687329021982&13900&499214&0.05583438\\
0&694991090210&44675&348873&0.04817231\\
0&704717283154&197035&095820&0.03844612\\
0&716227677984&260134&869444&0.02693572\\
0&719828726132&6874380&278267&0.02333467\\
0&720764507438&7026740&039690&0.02239889\\
0&730178633163&8596695&206382&0.01298477
\end{array}
\]
Within the same targeted search up to height $10^8$, no root with larger real part than the last entry was found.  This negative search statement is not used in any proof.

\section{Exact right edges for the partial sums}

For $N\ge12$ write
\[
\ZFN(s):=\sum_{n=1}^{N}F_n^{-s}
\]
and define
\begin{equation}\label{eq:HN}
H_N(\sigma):=\ZFN(\sigma)-4-2\,144^{-\sigma}.
\end{equation}
The same four-term obstruction determines the exact positive right edge of every such Dirichlet polynomial.

\begin{theorem}\label{thm:partial}
For every integer $N\ge12$ there is a unique $\sigma_N\in(1/2,3/4)$ satisfying
\begin{equation}\label{eq:sigmaN}
\ZFN(\sigma_N)=4+2\,144^{-\sigma_N}.
\end{equation}
If
\[
\mathcal R_{F,N}^{+}
:=\{\Ree\rho:\ZFN(\rho)=0,\ \Ree\rho>0\},
\]
then
\[
\overline{\mathcal R_{F,N}^{+}}=[0,\sigma_N].
\]
In particular,
\[
\sup\mathcal R_{F,N}^{+}=\sigma_N,
\]
and no zero of $\ZFN$ lies on the line $\Ree s=\sigma_N$.  Moreover
\[
\sigma_N\nearrow\sigma_F.
\]
\end{theorem}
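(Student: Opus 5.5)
The plan is to rerun Sections~2--4 with the infinite tail $R(\sigma)$ replaced by the finite tail
\[
R_N(\sigma):=\sum_{\substack{5\le n\le N\\ n\ne6,12}}F_n^{-\sigma},
\]
so that $H_N(\sigma)=R_N(\sigma)-L(\sigma)$. Since $N\ge12$, the four-term block $F_3,F_4,F_6,F_{12}$ lies entirely inside the partial sum. The same $L(\sigma)$ and the same Lemma~\ref{lem:torus} therefore apply verbatim.

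\textbf{Step 1: existence and uniqueness of $\sigma_N$.} The derivative computation in Lemma~\ref{lem:H} uses only the terms $n=3,4,5,6$ and $n=12$, all present when $N\ge12$. Hence $H_N'<0$ on $(0,\infty)$.

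For the location of the root:
\begin{itemize}
\item At $\sigma=1/2$ a direct numerical check of the twelve terms gives $H_{12}(1/2)\approx 5.07-4-2/12>0$.
\item Since $H_N\ge H_{12}$ pointwise, $H_N(1/2)>0$ for all $N\ge12$.
\item On the other side, $H_N(3/4)<H(3/4)<0$ by Lemma~\ref{lem:H}.
\end{itemize}
This gives a unique $\sigma_N\in(1/2,3/4)$.

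Monotonicity in $N$ follows from the strict inequalities $H_N<H_{N+1}<H$ pointwise. These give $\sigma_N<\sigma_{N+1}<\sigma_F$. Since $H_N\to H$ uniformly on $[1/2,3/4]$ and $H'$ is bounded away from $0$ there, we get $\sigma_N\nearrow\sigma_F$.

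\textbf{Step 2: zero-freeness for $\Ree s\ge\sigma_N$.} Copy Proposition~\ref{prop:zerofree}. Write $\ZFN(s)=P_\sigma(z,w)+E_N(s)$ with $|E_N|\le R_N(\sigma)$. A zero with $\sigma\ge\sigma_N$ would force $H_N(\sigma)\ge0$, hence $\sigma=\sigma_N$ with equality in Lemma~\ref{lem:torus}. That equality requires $2^{-it}=3^{-it}=-1$, which is impossible by the irrationality of $\log2/\log3$. Since $\sigma_N\ge1/2$, the hypothesis $\sigma\ge1/2$ of the torus lemma is satisfied.

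\textbf{Step 3: sharpness, i.e.\ every $\sigma\in(0,\sigma_N]$ is a limit of zero real parts.}

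First, fix $u_3=u_4=-1$, so the core contributes exactly $L(\sigma)$.

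Second, check the polygon condition for the free lengths $F_n^{-\sigma}$, $5\le n\le N$, $n\ne6,12$. The computation in Lemma~\ref{lem:twistzero} uses only $n=5,7,8,9$, so the ratio exceeds $2.066>2$ for $\sigma<3/4$. These finitely many lengths therefore realize every point of the disk $|z|\le R_N(\sigma)$.

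Third, check $|L(\sigma)|\le R_N(\sigma)$:
\begin{itemize}
\item if $L\ge0$, this is $H_N(\sigma)\ge0$;
\item if $L<0$, it follows as before because $R_N$ contains the term $5^{-\sigma}$.
\end{itemize}
Lemma~\ref{lem:phase}, restricted to $n\le N$, then gives a completely multiplicative $\chi$ with $\sum_{n\le N}\chi(F_n)F_n^{-\sigma}=0$.

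Fourth, apply Kronecker's theorem to the finitely many primes dividing $F_1\cdots F_N$, as in Lemma~\ref{lem:vertical}. It gives shifts $t_j$ with $\ZFN(s+it_j)\to\ZFN(s;\chi)$ locally uniformly on all of $\C$; here no tail is needed.

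Fifth, note that $\ZFN(\,\cdot\,;\chi)\to2$ as $\Ree s\to\infty$, so it is not identically zero. Hurwitz's theorem then gives zeros of $\ZFN$ with real parts tending to $\sigma$. Closedness gives $[0,\sigma_N]\subset\overline{\mathcal R^+_{F,N}}$, and Step~2 gives the reverse inclusion.

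\textbf{Main obstacle.} The only step that is not a verbatim transcription is the finite-$N$ numerics: the polygon inequality and $H_N(1/2)>0$ at the smallest case $N=12$. Both are monotone in $N$, so checking $N=12$ once, by explicit summation, suffices.
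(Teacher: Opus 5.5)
Your proposal is correct and follows the paper's proof essentially step for step. It uses the same decomposition $H_N=R_N-L$, the same derivative domination and check $H_{12}(1/2)\approx0.9098>0$, the torus lemma for zero-freeness, and the polygon--phase-realization--Kronecker--Hurwitz chain for sharpness. The differences are cosmetic. You obtain $\sigma_N\to\sigma_F$ from uniform convergence $H_N\to H$ on $[1/2,3/4]$ rather than the paper's contradiction argument. Your polygon ratio uses only $n=5,7,8,9$ (giving $2.066$), whereas the paper also includes $n=10,11$.
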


\begin{proof}
For $N\ge12$, differentiation of \eqref{eq:HN} gives
\[
H_N'(\sigma)
=-\!\sum_{\substack{3\le n\le N\\ n\ne12}}
(\log F_n)F_n^{-\sigma}
+(\log144)144^{-\sigma}.
\]
As in Lemma~\ref{lem:H}, the terms $n=3,4,5,6$ already dominate the positive term for every $\sigma\ge0$, so $H_N'(\sigma)<0$.  A direct finite calculation gives
\[
H_{12}(1/2)=0.9097970371\ldots>0,
\]
and hence $H_N(1/2)>0$.  On the other hand
\[
H_N(3/4)<H(3/4)<0.
\]
Thus \eqref{eq:sigmaN} has a unique solution in $(1/2,3/4)$.

Put
\[
R_N(\sigma)=\sum_{\substack{5\le n\le N\\n\ne6,12}}F_n^{-\sigma}.
\]
Then $H_N(\sigma)=R_N(\sigma)-L(\sigma)$.  If $\ZFN(\sigma+it)=0$ with $\sigma\ge1/2$, the decomposition used in Proposition~\ref{prop:zerofree}, together with Lemma~\ref{lem:torus}, gives
\[
L(\sigma)\le R_N(\sigma),
\]
hence $H_N(\sigma)\ge0$ and therefore $\sigma\le\sigma_N$.  Equality $\sigma=\sigma_N$ would force equality in Lemma~\ref{lem:torus}, hence $2^{-it}=3^{-it}=-1$, which is impossible.  Thus the boundary is not attained.

To prove sharpness and the density of the positive real projections, fix $0<\sigma\le\sigma_N$.  The free lengths have total mass $R_N(\sigma)$ and largest member $5^{-\sigma}$.  Since $\sigma_N<3/4$ and $N\ge12$,
\begin{align*}
\frac{R_N(\sigma)}{5^{-\sigma}}
&\ge 1+\left(\frac5{13}\right)^{3/4}
       +\left(\frac5{21}\right)^{3/4}
       +\left(\frac5{34}\right)^{3/4}
       +\left(\frac5{55}\right)^{3/4}
       +\left(\frac5{89}\right)^{3/4}>2.
\end{align*}
Also $|L(\sigma)|\le R_N(\sigma)$, by the same argument as in Lemma~\ref{lem:twistzero}, since $H_N(\sigma)\ge0$.  The polygon lemma therefore supplies phases of the free terms whose sum is $-L(\sigma)$, and Lemma~\ref{lem:phase} realizes them by a completely multiplicative twist.  Kronecker approximation and Hurwitz's theorem then give zeros of $\ZFN$ with real parts arbitrarily close to $\sigma$.  Hence
\[
\overline{\mathcal R_{F,N}^{+}}=[0,\sigma_N].
\]

Finally, $H_{N+1}(\sigma)>H_N(\sigma)$ for every $\sigma>0$, so the roots $\sigma_N$ increase.  Since $H_N(\sigma_F)<H(\sigma_F)=0$, they are bounded above by $\sigma_F$.  If their limit were $\lambda<\sigma_F$, choose $\sigma\in(\lambda,\sigma_F)$.  Then $H_N(\sigma)<0$ for every sufficiently large $N$, whereas $H_N(\sigma)\to H(\sigma)>0$, a contradiction.  Thus $\sigma_N\to\sigma_F$.
\end{proof}

The convergence of these exact edges is exponentially fast.

\begin{corollary}\label{cor:rate}
As $N\to\infty$,
\begin{equation}\label{eq:rate}
\sigma_F-\sigma_N
\sim
C_F\,\varphi^{-(N+1)\sigma_F},
\end{equation}
where
\[
C_F=
\frac{5^{\sigma_F/2}}
{(-H'(\sigma_F))(1-\varphi^{-\sigma_F})}
=1.77646645548344\ldots.
\]
\end{corollary}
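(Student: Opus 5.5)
The plan is a mean-value-theorem comparison of $H_N$ and $H$ at their roots. By \eqref{eq:HN} and \eqref{eq:sigma-def}, $H(\sigma)-H_N(\sigma)=\sum_{n>N}F_n^{-\sigma}=:T_N(\sigma)$. Since $H_N(\sigma_N)=0$ and $H(\sigma_F)=0$, we get
\[
H(\sigma_N)=H(\sigma_N)-H_N(\sigma_N)=T_N(\sigma_N).
\]
Also $H(\sigma_N)-H(\sigma_F)=-H'(\xi_N)(\sigma_F-\sigma_N)$ for some $\xi_N\in(\sigma_N,\sigma_F)$. Therefore
\[
\sigma_F-\sigma_N=\frac{T_N(\sigma_N)}{-H'(\xi_N)}.
\]
By Lemma~\ref{lem:H}, $H'<0$ everywhere, and by Theorem~\ref{thm:partial}, $\sigma_N\to\sigma_F$. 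Continuity of $H'$ then gives $-H'(\xi_N)\to-H'(\sigma_F)>0$.

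It remains to find the asymptotics of the tail. Binet's formula gives $F_n=5^{-1/2}\varphi^n(1-(-1)^n\varphi^{-2n})$, so uniformly for $\sigma$ in a compact neighbourhood of $\sigma_F$,
\[
F_n^{-\sigma}=5^{\sigma/2}\varphi^{-n\sigma}\bigl(1+O(\varphi^{-2n})\bigr).
\]
Summing the geometric series over $n\ge N+1$ yields
\[
T_N(\sigma)=\frac{5^{\sigma/2}\varphi^{-(N+1)\sigma}}{1-\varphi^{-\sigma}}\bigl(1+O(\varphi^{-2N})\bigr).
\]
Evaluating this at $\sigma=\sigma_N$, the prefactor $5^{\sigma_N/2}/(1-\varphi^{-\sigma_N})$ tends to its value at $\sigma_F$.

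The main obstacle is the exponential factor. We have
\[
\varphi^{-(N+1)\sigma_N}=\varphi^{-(N+1)\sigma_F}\exp\bigl((N+1)(\sigma_F-\sigma_N)\log\varphi\bigr),
\]
so plain convergence $\sigma_N\to\sigma_F$ is not enough; we need $N(\sigma_F-\sigma_N)\to0$. I would get this by a bootstrap.

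First take a crude bound. Since $\sigma_N\ge\sigma_{12}>1/2$ by Theorem~\ref{thm:partial}, we have $T_N(\sigma_N)\le T_N(1/2)\ll\varphi^{-N/2}$. On a compact interval $[\sigma_{12},\sigma_F]$, $-H'$ is bounded below by a positive constant. Hence $\sigma_F-\sigma_N\ll\varphi^{-N/2}$.

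With this bound, $(N+1)(\sigma_F-\sigma_N)\ll N\varphi^{-N/2}\to0$, so the exponential correction factor tends to $1$. Substituting everything into the displayed identity gives
\[
\sigma_F-\sigma_N\sim\frac{5^{\sigma_F/2}}{(-H'(\sigma_F))(1-\varphi^{-\sigma_F})}\,\varphi^{-(N+1)\sigma_F}=C_F\,\varphi^{-(N+1)\sigma_F}.
\]
The numerical value of $C_F$ then follows from the computed $\sigma_F$ and from evaluating $H'(\sigma_F)$ by the series formula in Lemma~\ref{lem:H}, with a truncated sum and a tail bound as before.
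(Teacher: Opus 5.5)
Your proposal is correct and follows essentially the same route as the paper. Both arguments use the mean-value identity $\sigma_F-\sigma_N=T_N(\sigma_N)/(-H'(\xi_N))$, then a crude exponential bound on $\sigma_F-\sigma_N$ to get $N(\sigma_F-\sigma_N)\to0$, and finally the Binet tail asymptotic. The only differences are cosmetic: the paper phrases the last step as $T_N(\sigma_N)/T_N(\sigma_F)\to1$ with crude bound $O(\varphi^{-N\sigma_F/2})$, whereas you evaluate the Binet asymptotic at $\sigma_N$ directly and use $\sigma_N>1/2$ to get $O(\varphi^{-N/2})$.
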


\begin{proof}
Let
\[
T_N(\sigma)=\sum_{n>N}F_n^{-\sigma},
\]
so that $H_N=H-T_N$.  Since $H(\sigma_F)=0$ and $H'(\sigma_F)<0$, the mean-value theorem gives
\[
\sigma_F-\sigma_N
=\frac{T_N(\sigma_N)}{-H'(\xi_N)}
\]
for some $\xi_N\in(\sigma_N,\sigma_F)$.  Theorem~\ref{thm:partial} gives $\sigma_N\to\sigma_F$, so $H'(\xi_N)\to H'(\sigma_F)$.  Since $-H'$ is bounded away from zero near $\sigma_F$ and $F_n\gg\varphi^n$, the displayed identity also gives
\[
\sigma_F-\sigma_N=O(\varphi^{-N\sigma_F/2}),
\]
and hence $N(\sigma_F-\sigma_N)\to0$.  The geometric tail asymptotic from Binet's formula, uniformly for $\sigma$ near $\sigma_F$, then shows
\[
\frac{T_N(\sigma_N)}{T_N(\sigma_F)}\longrightarrow1.
\]
Consequently
\[
\sigma_F-\sigma_N
\sim \frac{T_N(\sigma_F)}{-H'(\sigma_F)}.
\]

Binet's formula gives
\[
F_n^{-\sigma_F}
=5^{\sigma_F/2}\varphi^{-n\sigma_F}
\bigl(1+O(\varphi^{-2n})\bigr),
\]
and therefore
\[
T_N(\sigma_F)
\sim
\frac{5^{\sigma_F/2}\varphi^{-(N+1)\sigma_F}}
{1-\varphi^{-\sigma_F}}.
\]
Substitution yields \eqref{eq:rate}.
\end{proof}

\section{A global pole-removing completion}\label{sec:completion}

The preceding sections deliberately used only the half-plane $\Ree s>0$.  The known meromorphic continuation has a very different geometry on the left: Egami and Navas proved, and the recent treatment in \cite{AssafKuanLowryDudaWalker2024} recalls, that
\begin{equation}\label{eq:global-continuation}
\ZF(s)
=5^{s/2}\sum_{k\ge0}\binom{-s}{k}
\frac{1}{\varphi^{s+2k}+(-1)^{k+1}},
\end{equation}
with simple poles
\begin{equation}\label{eq:pole-lattice}
p_{k,m}
=-2k+\frac{(2m+k)\pi i}{\log\varphi},
\qquad k\ge0,\quad m\in\mathbb Z.
\end{equation}
Navas also identified the real trivial zeros $-2,-6,-10,\ldots$; see \cite{Navas2001}.  We do not attempt to classify all nonreal zeros in $\Ree s<0$.  Instead, the pole lattice can be removed exactly, which gives a useful global framework for the zero divisor.

Put
\[
\Lambda=\log\varphi,
\qquad
q=-\varphi^{-2},
\]
and, for $|q|<1$, use the standard notation
\[
(a;q)_\infty=\prod_{j=0}^{\infty}(1-aq^j),
\qquad
(q;q)_k=\prod_{j=1}^{k}(1-q^j).
\]
Define
\begin{equation}\label{eq:completion-def}
P_F(s):=(\varphi^{-s};q)_\infty,
\qquad
\Xi_F(s):=P_F(s)\ZF(s).
\end{equation}

\begin{theorem}[Global completion]\label{thm:completion}
The function $\Xi_F$ extends to an entire function.  Its zeros, counted with multiplicity, are exactly the zeros of the meromorphic function $\ZF$: none of the poles in \eqref{eq:pole-lattice} becomes a zero after cancellation.  More precisely,
\begin{equation}\label{eq:completion-pole-value}
\Xi_F(p_{k,m})
=q^{-k(k+1)/2}(q;q)_k(q;q)_\infty
\,5^{p_{k,m}/2}\binom{-p_{k,m}}{k}\neq0.
\end{equation}
If
\[
M_\Xi(r)=\max_{|s|\le r}|\Xi_F(s)|,
\]
then
\begin{equation}\label{eq:completion-type}
\limsup_{r\to\infty}\frac{\log M_\Xi(r)}{r^2}
=\frac{\log\varphi}{4}.
\end{equation}
Thus $\Xi_F$ has exact order $2$ and exact type $\log\varphi/4$.

Let $n_\infty(R)$ be the number of poles of $\ZF$ in $|s|\le R$, and let $n_0(R)$ be the number of zeros there, both counted with multiplicity.  Then
\begin{equation}\label{eq:global-counting}
n_\infty(R)
=\frac{\log\varphi}{8}R^2+O(R),
\qquad
n_0(R)=O(R^2).
\end{equation}
\end{theorem}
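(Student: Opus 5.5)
The plan is to treat the $q$-Pochhammer factor as an explicit Weierstrass product whose zero set is exactly the pole lattice \eqref{eq:pole-lattice}. Its zeros are the solutions of $\varphi^{-s}q^j=1$, that is $\varphi^{-(s+2j)}=(-1)^j$, which gives $s=-2j+(2m+j)\pi i/\Lambda$ for $j\ge0$, $m\in\mathbb Z$. This is the set $\{p_{j,m}\}$. Distinct $j$ give distinct real parts, so every zero of $P_F$ is simple. The denominator in \eqref{eq:global-continuation}, $\varphi^{s+2k}+(-1)^{k+1}$, vanishes exactly on the same points with index $k=j$.

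Writing
\[
\Xi_F(s)=5^{s/2}\sum_{k\ge0}\binom{-s}{k}\frac{P_F(s)}{\varphi^{s+2k}+(-1)^{k+1}},
\]
each summand is entire, because the $k$th factor of $P_F$ cancels the $k$th denominator. I will show that the series converges locally uniformly. For $k$ large, the denominator is $\asymp\varphi^{\Ree s+2k}$ and $|\binom{-s}{k}|\le\binom{|s|+k}{k}$. This gives entirety.

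For \eqref{eq:completion-pole-value}, compute the limit at $p=p_{k,m}$. The denominator has derivative $\Lambda\varphi^{p+2k}=\Lambda(-1)^k$ there. The vanishing factor of $P_F$ satisfies $1-\varphi^{-s}q^k\sim\Lambda(s-p)$. The remaining factors are
\[
\prod_{j\ne k}(1-q^{j-k})=\prod_{i=1}^k(1-q^{-i})\,(q;q)_\infty=(-1)^kq^{-k(k+1)/2}(q;q)_k(q;q)_\infty .
\]
The signs cancel, which gives the displayed value. Nonvanishing needs three checks. First, $(q;q)_k(q;q)_\infty\ne0$ because $|q|<1$. Second, $5^{p/2}\ne0$. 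Third, $\binom{-p}{k}=0$ would require $-p\in\{0,\dots,k-1\}$. This forces $p$ to be real, so $2m+k=0$ and $-p=2k$, which is not $<k$ unless $k=0$, where the binomial is $1$. Consequently the zeros of $\Xi_F$ are exactly those of $\ZF$, with multiplicities.

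For the growth, the upper bound comes from estimating $\log|P_F(s)|$ at $s=-x+iy$. Only the factors with $j<x/2$ have $|\varphi^{-s}q^j|>1$, and they contribute at most $\sum_{j<x/2}((x-2j)\Lambda+\log2)=\Lambda x^2/4+O(|s|)$. The other factors contribute $O(1)$, and the same bound holds with the $k$th factor omitted. Combining this with $|\binom{-s}{k}|\le\binom{|s|+k}{k}$ and the lower bound on the denominators away from their own zero, I expect $\log M_\Xi(r)\le\Lambda r^2/4+O(r\log r)$.

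The hardest step is the matching lower bound, since $\ZF$ itself could be small. I would avoid estimating $\ZF$ directly and instead evaluate at the removed poles, using \eqref{eq:completion-pole-value}. Choose $m$ with $|2m+k|\le1$, so that $|p_{k,m}|=2k+O(1)=:r$. Then
\[
|q^{-k(k+1)/2}|=\varphi^{k(k+1)}=e^{\Lambda r^2/4+O(r)}.
\]
Also $(q;q)_k$ is bounded below, $|5^{p/2}|=5^{-k}$, and $|\binom{2k-iy}{k}|\gg1$ for bounded $y$, which I would check via Gamma-function bounds. Together these give $\log M_\Xi(r)\ge\Lambda r^2/4-O(r)$, hence \eqref{eq:completion-type}.

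For \eqref{eq:global-counting}, the poles form a lattice in the left half-plane. Consecutive columns $\Ree s=-2k$ are spaced by $2$, and each column has vertical spacing $2\pi/\Lambda$, so the point density is $\Lambda/(4\pi)$. Multiplying by the area $\pi R^2/2$ of the half-disk, and counting the boundary lattice cells as $O(R)$, gives $n_\infty(R)=\Lambda R^2/8+O(R)$. Finally, $n_0(R)=O(R^2)$ follows from Jensen's formula applied to $\Xi_F$, which has order $2$ and finite type. I would centre it at a point $s_0$ with $\Xi_F(s_0)\ne0$, for example large real $s_0$ where $\ZF\approx2$.
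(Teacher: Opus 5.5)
Your proposal is correct and follows the paper's proof essentially step by step. You cancel the $k$th $q$-Pochhammer factor against the $k$th denominator, and you obtain $\Xi_F(p_{k,m})$ from $P_F'(p)$ together with the residue. For the growth you bound the product $\prod_{j\ne k}|1-\varphi^{-s}q^j|$ by $e^{\Lambda r^2/4+O(r)}$ and test the explicit pole values from below, and you finish by counting lattice points and applying Jensen's formula.

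The differences from the paper are only cosmetic:
\begin{itemize}
\item You take $p_{k,m}$ with $|2m+k|\le1$ rather than the real points $-4N$. In this case $|\binom{2k-iy}{k}|\ge\binom{2k}{k}\ge1$ directly, so no Gamma-function bounds are needed.
\item You use a lattice-density count where the paper sums over columns.
\item Your error term $O(r\log r)$ is weaker than the paper's $O(r)$ but still suffices for the limsup.
\item After the cancellation the remaining denominator is just $\varphi^{s+2k}$, whose modulus is $\varphi^{\Ree s+2k}$. So there is no need for a lower bound on the denominators away from their zeros.
\end{itemize}
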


\begin{proof}
The $k$th denominator in \eqref{eq:global-continuation} factors as
\[
\varphi^{s+2k}+(-1)^{k+1}
=\varphi^{s+2k}\bigl(1-\varphi^{-s}q^k\bigr).
\]
Consequently the zeros of $P_F$ are precisely the points \eqref{eq:pole-lattice}, and they are simple.  At $p=p_{k,m}$ one has $\varphi^{-p}q^k=1$, and therefore
\begin{align*}
P_F'(p)
&=\Lambda\prod_{j\ne k}(1-q^{j-k})\\
&=\Lambda(-1)^kq^{-k(k+1)/2}(q;q)_k(q;q)_\infty.
\end{align*}
On the other hand, the residue of \eqref{eq:global-continuation} at the same point is
\[
\operatorname*{Res}_{s=p}\ZF(s)
=\frac{5^{p/2}\binom{-p}{k}}
{\Lambda(-1)^k}.
\]
Multiplying these two expressions proves \eqref{eq:completion-pole-value}.  The right-hand side is nonzero: $(q;q)_\infty\ne0$ since $|q|<1$, and $\Ree(-p_{k,m})=2k$ prevents $\binom{-p_{k,m}}{k}$ from vanishing.  Hence every singularity is removable in \eqref{eq:completion-def}, and the cancellation introduces no new zero.

For the growth estimate, write
\[
P_{F,k}(s)=\prod_{\substack{j\ge0\\j\ne k}}
(1-\varphi^{-s}q^j).
\]
For every fixed $k$, the product $P_{F,k}$ converges locally uniformly and defines an entire function.  Moreover, for $|s|\le r$, $r\ge1$,
\[
|P_{F,k}(s)|
\le Q(r):=\prod_{j\ge0}(1+\varphi^{r-2j}).
\]
With $x=\varphi^{-2}$ one also has
\[
\sum_{k\ge0}\left|\binom{-s}{k}\right|x^k
\le (1-x)^{-r}.
\]
Consequently
\[
\sum_{k\ge0}\left|\binom{-s}{k}\right|\varphi^{-2k}|P_{F,k}(s)|
\le Q(r)(1-\varphi^{-2})^{-r},
\]
so the series below converges absolutely and locally uniformly on $\mathbb C$ by the Weierstrass test.  Off the pole lattice direct cancellation in \eqref{eq:global-continuation} gives
\begin{equation}\label{eq:Xi-series}
\Xi_F(s)
=5^{s/2}\varphi^{-s}
\sum_{k\ge0}\binom{-s}{k}\varphi^{-2k}P_{F,k}(s),
\end{equation}
and since the right-hand side is entire, the identity holds everywhere by continuation.

Splitting the logarithm of $Q(r)$ at $j=r/2$ gives
\begin{equation}\label{eq:qproduct-growth}
\sum_{j\ge0}\log(1+\varphi^{r-2j})
\le \frac{\Lambda}{4}r^2+O(r).
\end{equation}
The remaining prefactor in \eqref{eq:Xi-series}, as well as the factor $(1-\varphi^{-2})^{-r}$ already displayed above, is only exponential in $r$.  Thus
\[
\log M_\Xi(r)\le \frac{\Lambda}{4}r^2+O(r).
\]

For the reverse bound, use the real pole subsequence $p_{2N,-N}=-4N$.  Formula \eqref{eq:completion-pole-value} yields
\[
|\Xi_F(-4N)|
=|q|^{-N(2N+1)}
|(q;q)_{2N}(q;q)_\infty|
\,5^{-2N}\binom{4N}{2N}.
\]
Since $(q;q)_{2N}\to(q;q)_\infty\ne0$, Stirling's formula gives
\[
\log|\Xi_F(-4N)|
=4\Lambda N^2+O(N)
=\frac{\Lambda}{4}(4N)^2+O(N),
\]
which proves \eqref{eq:completion-type}.

Finally, for fixed $k$ the poles in \eqref{eq:pole-lattice} have vertical spacing $2\pi/\Lambda$.  Hence
\begin{align*}
n_\infty(R)
&=\sum_{0\le k\le R/2}
\left(\frac{\Lambda}{\pi}\sqrt{R^2-4k^2}+O(1)\right)\\
&=\frac{\Lambda}{\pi}
\int_0^{R/2}\sqrt{R^2-4x^2}\,dx+O(R)\\
&=\frac{\Lambda}{8}R^2+O(R).
\end{align*}
The bound $n_0(R)=O(R^2)$ follows from Jensen's formula applied to the entire function $\Xi_F$ and the order-two bound above.
\end{proof}

The theorem packages the global zero/pole geometry into a single entire function.  Since $n_0(R)=O(R^2)$, partial summation gives
\[
\sum_{\rho}|\rho|^{-3}<\infty,
\]
where the sum is over the zeros with multiplicity.  Thus the genus-$2$ canonical product converges.  Writing
\[
E_2(z)=(1-z)\exp\!\left(z+\frac{z^2}{2}\right),
\]
Hadamard factorization for an entire function of order $2$ gives constants $A,B,C\in\C$ such that
\begin{equation}\label{eq:hadamard-completion}
\Xi_F(s)
=e^{As^2+Bs+C}
\prod_{\rho}E_2\!\left(\frac{s}{\rho}\right)
=e^{As^2+Bs+C}
\prod_{\rho}
\left(1-\frac{s}{\rho}\right)
\exp\!\left(\frac{s}{\rho}+\frac{s^2}{2\rho^2}\right),
\end{equation}
where $\rho$ runs over all zeros of $\ZF$, with multiplicity.  Thus the explicitly known pole half-lattice is separated cleanly from the still much subtler global zero divisor.  The trivial zeros $\rho=-4j-2$ found by Navas occur among the factors in \eqref{eq:hadamard-completion}.  No analogue of a Riemann-type functional equation is known; compare \cite{Navas2001}.  Thus \eqref{eq:hadamard-completion} should be viewed as a global bookkeeping device rather than evidence for a reflected ``critical strip.''

\section{Numerical value and comparison}

The defining equation for the sharp edge is
\[
\ZF(\sigma_F)=4+2\,144^{-\sigma_F},
\]
which gives
\[
\boxed{\sigma_F=0.743163398726901648018218067138\ldots}.
\]
For comparison, the earlier triangle-inequality boundary $\eta$ is defined by $\ZF(\eta)=4$ and equals
\[
\eta=0.7570549496906548985355124\ldots.
\]
Thus the exceptional relation at $F_{12}=144$ shifts the exact right edge left by about
\[
\eta-\sigma_F=0.01389155096375325\ldots.
\]

For the partial sums, the first few exact right edges are
\[
\begin{array}{c|ccccc}
N&12&15&20&30&\infty\\
\hline
\sigma_N&0.7243955687&0.7371070760&0.7421818981&0.7431361685&0.7431633987
\end{array}
\]
and Corollary~\ref{cor:rate} explains the rapid stabilization.

\medskip
\noindent\textbf{Exploratory Jessen-density values.}
Using the exact conditional formula \eqref{eq:conditional-jensen}, a scrambled Sobol quasi-Monte Carlo calculation gives the following indicative values.  Here $D_F(\sigma)$ is the mean number of zeros to the right of $\sigma$, per unit vertical length in the normalization \eqref{eq:cumulative-density}.
\[
\begin{array}{c|cc}
\sigma&\JF(\sigma)&D_F(\sigma)\\
\hline
0.10&1.0327&0.5670\\
0.20&0.82270&0.1912\\
0.30&0.74365&0.07859\\
0.40&0.71082&0.03241\\
0.50&0.69763&0.01212\\
0.60&0.693569&0.00220\\
0.65&0.693199&0.00044\\
\sigma_F&\log2&0
\end{array}
\]
The rapid collapse of $D_F$ near $\sigma_F$ is consistent with the rigorous infinite-order flatness in \eqref{eq:flat-edge-density}; in particular one should not expect a simple power-law onset at the right edge.

The main point is structural rather than numerical: the rightmost possible phase configuration is not obtained by assigning independent signs to all Fibonacci terms.  The relations $F_6=2^3$ and $F_{12}=2^4 3^2$ force the $F_{12}$ term to point in the opposite direction from the terms $F_3,F_4,F_6$ at the extremal twist, producing the correction $2\,144^{-\sigma}$ in \eqref{eq:sigma-def}.  Carmichael's theorem then shows that, beyond these exceptional dependencies, all remaining Fibonacci terms have enough new prime phase freedom to make the bound sharp in the sense of Theorem~\ref{thm:main}.

\section*{Reproducibility statement}
A frozen reproducibility snapshot corresponding to arXiv v1, including all scripts described below, their recorded outputs, the LaTeX source, and the compiled manuscript, is archived on Zenodo (DOI: \href{https://doi.org/\ZenodoDOI}{\texttt{\ZenodoDOI}}).  The archive also contains a README file, a minimal Python requirements file, citation metadata, a file manifest, and SHA-256 checksums.  The script \texttt{verify.py} evaluates $\sigma_F$, $\eta$, the sign checks in Lemma~\ref{lem:H}, the partial-sum edges $\sigma_N$, and the asymptotic constant $C_F$.  The script \texttt{jessen\_density.py} implements the independent-phase model and the conditional formula \eqref{eq:conditional-jensen} using scrambled Sobol sampling to reproduce the exploratory Jessen-density table.  The script \texttt{edge\_height.py} refines and verifies the near-edge zeros in Section~\ref{sec:height} at high precision and reports the shrinking-target normalization.  The exploratory phase-guided scan is implemented in \texttt{edge\_height\_search.py}; the recorded $10^8$ run is supplied as \texttt{edge\_height\_search\_100m.txt}.  The script \texttt{lucas\_core\_check.py} evaluates the Pell edge in Corollary~\ref{cor:pell}.  Finally, \texttt{completion\_check.py} numerically checks the removable-value formula \eqref{eq:completion-pole-value} and the approach to the exact type in \eqref{eq:completion-type}.  None of the rigorous results depends on these numerical experiments.

\section*{AI-Disclosure}
The author used OpenAI's ChatGPT 5.6 Sol for help in language editing, LaTeX restructuring, and the preparation of verification scripts.  All mathematical statements, proofs, computations, references, and the final presentation were independently verified by the author, who assumes full responsibility for the content.

\end{document}